\newcommand{\ed}{\end{document}}
\DeclareMathOperator{\mymod}{\mathrm{mod}}
\newenvironment{smallmaplegroup}%
{\small \begin{maplegroup}}{\end{maplegroup} \vskip-0.75ex}%% 
\newcommand{\mytab}{\phantom{>>\;}}
 \newtheorem{thm}{Theorem}[section]
 \newtheorem{lem}[thm]{Lemma}
 \theoremstyle{definition}
 \theoremstyle{remark}
 \newtheorem*{ex}{Example}
 \numberwithin{equation}{section}
\newcommand{\HQ}{\mathbb{H}}
\newcommand{\K}{\mathbb{K}}
\newcommand{\A}{\mathcal{A}}
\newcommand{\M}{\mathcal{M}}
\newcommand{\Aut}{\mathrm{Aut}}
\newcommand{\Inn}{\mathrm{Inn}}
\newcommand{\G}{\mathrm{G}}
\newcommand{\Cent}{\mathrm{Cent}}
\newcommand{\Zof}{\mathrm{Z}}
\newcommand{\Scal}{\mathrm{Scal}}
\newcommand{\Spec}{\mathrm{Spec}}
\newcommand{\unitm}{\mathbf{1}} %%% the identity matrix
\newcommand{\zerom}{\mathbf{0}} %%% the zero matrix
\newcommand{\trace}[1]{\mathrm{tr}(#1)}
\newcommand{\cl}{C \kern -0.1em \ell}     %Clifford algebra
\newcommand{\be}{\begin{equation}}
\newcommand{\ee}{\end{equation}}
\begin{document}
\title{Square Roots of $-1$ in Real Clifford Algebras}
%----------Author 1
\author[E. Hitzer]{Eckhard Hitzer}
\address{%
Department of Applied Physics,\\ 
University of Fukui,\\
Japan}
\email{hitzer@mech.fukui-u.ac.jp}

%----------Author 2
\author[J. Helmstetter]{Jacques Helmstetter}
\address{%
Univesit\'{e} Grenoble I,\\ 
Institut Fourier (Math\'{e}matiques),\\
B.P. 74, F-38402 Saint-Martin d'H\`{e}res,\\ 
France}
\email{Jacques.Helmstetter@ujf-grenoble.fr}

%----------Author 3
\author[R.~Ab\l amowicz]{Rafa\l \ Ab\l amowicz}
\address{%
Department of Mathematics, Box 5054,\\
Tennessee Technological University,\\
Cookeville, TN 38505, USA}
\email{rablamowicz@tntech.edu}

%Jacques Helmstetter,
%Inst. Fourier (Math.),
%Univ. of Grenoble I, B.P. 74,\\ 38402 Saint-Martin D'H$\grave{\mathrm{e}}$res Cedex, France,
%Jacques.Helmstetter@ujf-grenoble.fr\\
%%%
%Rafa{\l} Ab{\l}amowicz,
%Dep. of Mathematics, Box 5054, Tennessee Technological Univ.,\\
%Cookeville, TN 38505, USA, 
%{rablamowicz@tntech.edu}}
%\date{\today}
%
%\address{%
%Department of Mathematics, Box 5054\\
%Tennessee Technological University\\
%Cookeville, TN 38505, USA}
%\email{rablamowicz@tntech.edu}

\subjclass{Primary 15A66; Secondary 11E88, 42A38, 30G35}

\keywords{algebra automorphism, inner automorphism, center, centralizer, Clifford algebra, conjugacy class, determinant, primitive idempotent, trace}

\date{February 17, 2012}

\begin{abstract}
It is well known that Clifford (geometric) algebra offers a geometric interpretation for square roots of $-1$ in the form of blades that square to minus $1$. This extends to a geometric interpretation of quaternions as the side face bivectors of a unit cube. Systematic research has been done~\cite{SJS:Biqroots} on the biquaternion roots of $-1$, abandoning the restriction to blades. Biquaternions are isomorphic to the Clifford (geometric) algebra $\cl(3,0)$ of $\R^3$. Further research on general algebras $\cl(p,q)$ has explicitly derived the geometric roots of $-1$ for  $p+q \leq 4$~\cite{HA:GeoRoots-1}. The current research abandons this dimension limit and uses the Clifford algebra to matrix algebra isomorphisms in order to algebraically characterize the continuous manifolds of square roots of $-1$ found in the different types of Clifford algebras, depending on the type of associated ring ($\R$, $\H$, $\R^2$, $\HQ^2$, or $\C$). At the end of the paper explicit computer generated tables of representative square roots of $-1$ are given for all Clifford algebras with $n=5,7$, and $s=3 \, (\mymod 4)$ with the associated ring $\C$. This includes, e.g., $\cl(0,5)$ important in Clifford analysis, and $\cl(4,1)$ which in applications is at the foundation of conformal geometric algebra. All these roots of $-1$ are immediately useful in the construction of new types of geometric Clifford Fourier transformations.
\end{abstract}

%%% Previous abstract:
%\begin{abstract}
%Let $\cl(p, q)$ be the universal Clifford algebra (associative with unit) generated over $\R$ by $p + q$ elements~$e_k$ (with $k = 1, 2,\ldots, p + q)$ with the relations $e^2_k = 1$ if $k \leq p$, $e^2_k = -1$ if $k > p$ and $e_he_k + e_ke_h = 0$ whenever $h \neq k$, see \cite{PL:CAaSpin}. We set $n = p + q$ and $s = p - q$. This algebra has dimension $2^n$, and its even subalgebra $\cl_0(p, q)$ has dimension $2^{n-1}$ (if $n > 0$). We are concerned with square roots of $-1$ contained in $\cl(p, q)$ or $\cl_0(p, q)$. If the dimension of $\cl(p, q)$ or $\cl_0(p, q)$ is $\leq 2$, it is isomorphic to $\R$ or $\R^2$ or $\C$, and it is clear that there is no square root of $-1$ in $\R$ and $\R^2 = \R \times \R$, and that there are two squares roots $i$ and $-i$ in $\C$. Therefore we only consider algebras of dimension $\geq 4$. Square roots of $-1$ have been computed in \cite{HA:GeoRoots-1} for algebras of dimension $\leq 16$, and for $\cl(3,0)$ in \cite{SJS:Biqroots}.
%\end{abstract}

\maketitle

\section{Introduction}
The young London Goldsmid professor of applied mathematics W.~K.~Clifford created his \textit{geometric algebras}\footnote{%
In his original publication~\cite{WKC:AppGrass} Clifford first used the term \textit{geometric algebras}. Subsequently in mathematics the new term \textit{Clifford algebras}~\cite{PL:CAaSpin} has become the proper mathematical term. For emphasizing the \textit{geometric} nature of the algebra, some researchers continue~\cite{DH:NF1,HS:CAtoGC,GS:ncHCFT} to use the original term geometric algebra(s).} in 1878 inspired by the works of Hamilton on quaternions and by Grassmann's exterior algebra. Grassmann invented the antisymmetric outer product of vectors, that regards the oriented parallelogram area spanned by two vectors as a new type of number, commonly called bivector. The bivector represents its own plane, because outer products with vectors in the plane vanish. In three dimensions the outer product of three linearly independent vectors defines a so-called trivector with the magnitude of the volume of the parallelepiped spanned by the vectors. Its orientation (sign) depends on the handedness of the three vectors. 

In the Clifford algebra~\cite{DH:NF1} of $\R^3$ the three bivector side faces of a unit cube $\{{e}_1{e}_2, {e}_2{e}_3, {e}_3{e}_1\}$ oriented along the three coordinate directions $\{{e}_1, {e}_2, {e}_3\}$ correspond to the three quaternion units $\i$, $\j$, and $\k$. Like quaternions, these three bivectors square to minus one and generate the rotations in their respective planes. 

Beyond that Clifford algebra allows to extend complex numbers to higher dimensions~\cite{HS:CAtoGC,BDS:CA} and systematically generalize our knowledge of complex numbers, holomorphic functions and quaternions into the realm of Clifford analysis. It has found rich applications in symbolic computation, physics, robotics, computer graphics, etc.~\cite{GS:ncHCFT,TB:thesis,MF:thesis,HL:IAGR,DL:GAinPrac}. Since bivectors and trivectors in the Clifford algebras of Euclidean vector spaces square to minus one, we can use them to create new geometric kernels for Fourier transformations. This leads to a large variety of new Fourier transformations, which all deserve to be studied in their own right~\cite{LMQ:CAFT94,AM:CAFT96,TQ:PWT,ES:CFTonVF,HM:CFTUP,MHAV:WQFT,HM:CFaUP,HM:CFToMVF,EH:QFTgen,MHA:2DCliffWinFT,SBS:FastCmQFT,GS:ncHCFT,EH:OPS-QFT}. 

In our current research we will treat square roots of $-1$ in Clifford algebras $\cl(p,q)$ of both Euclidean (positive definite metric) and non-Euclidean (indefinite metric) non-degenerate vector spaces, $\R^{n}=\R^{n,0}$ and $\R^{p,q}$, respectively. We know from Einstein's special theory of relativity that non-Euclidean vector spaces are of fundamental importance in nature~\cite{DH:STA}. They are further, e.g., used in computer vision and robotics~\cite{DL:GAinPrac} and for general algebraic solutions to contact problems \cite{HL:IAGR}. Therefore this chapter is about characterizing square roots of $-1$ in all Clifford algebras $\cl(p,q)$, extending previous limited research on $\cl(3,0)$ in~\cite{SJS:Biqroots} and $\cl(p,q), n=p+q\leq 4$ in~\cite{HA:GeoRoots-1}. The manifolds of square roots of $-1$ in $\cl(p,q)$, $n=p+q=2$, compare Table 1 of~\cite{HA:GeoRoots-1}, are visualized in Fig. \ref{fg:Cln=2}.

\begin{figure}
  \begin{center}      
%\resizebox{0.4\textwidth}{!}{\includegraphics{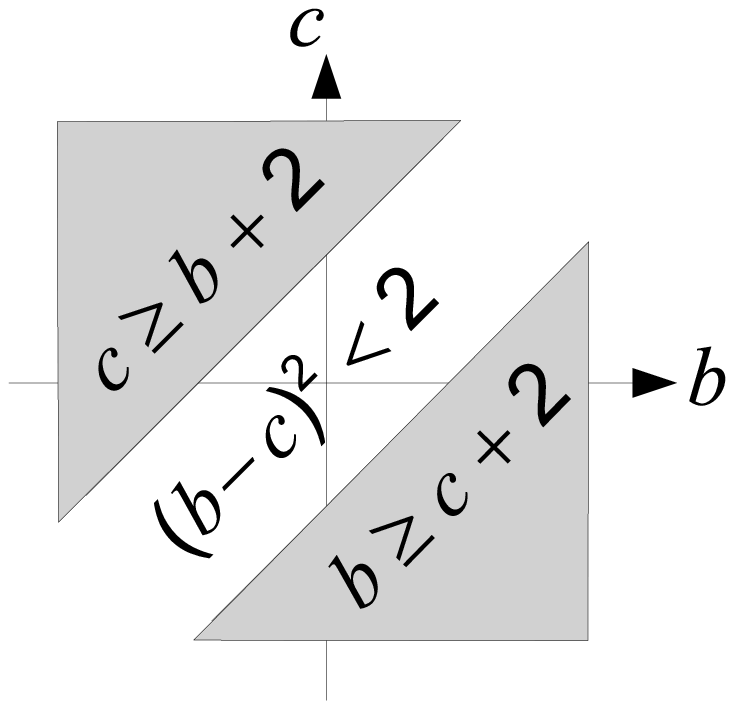}} %%%<<<--- Eckhard: Don't uncomment it!
%% uncommenting this line changes font in the text after the figure!
    \includegraphics[scale=0.35]{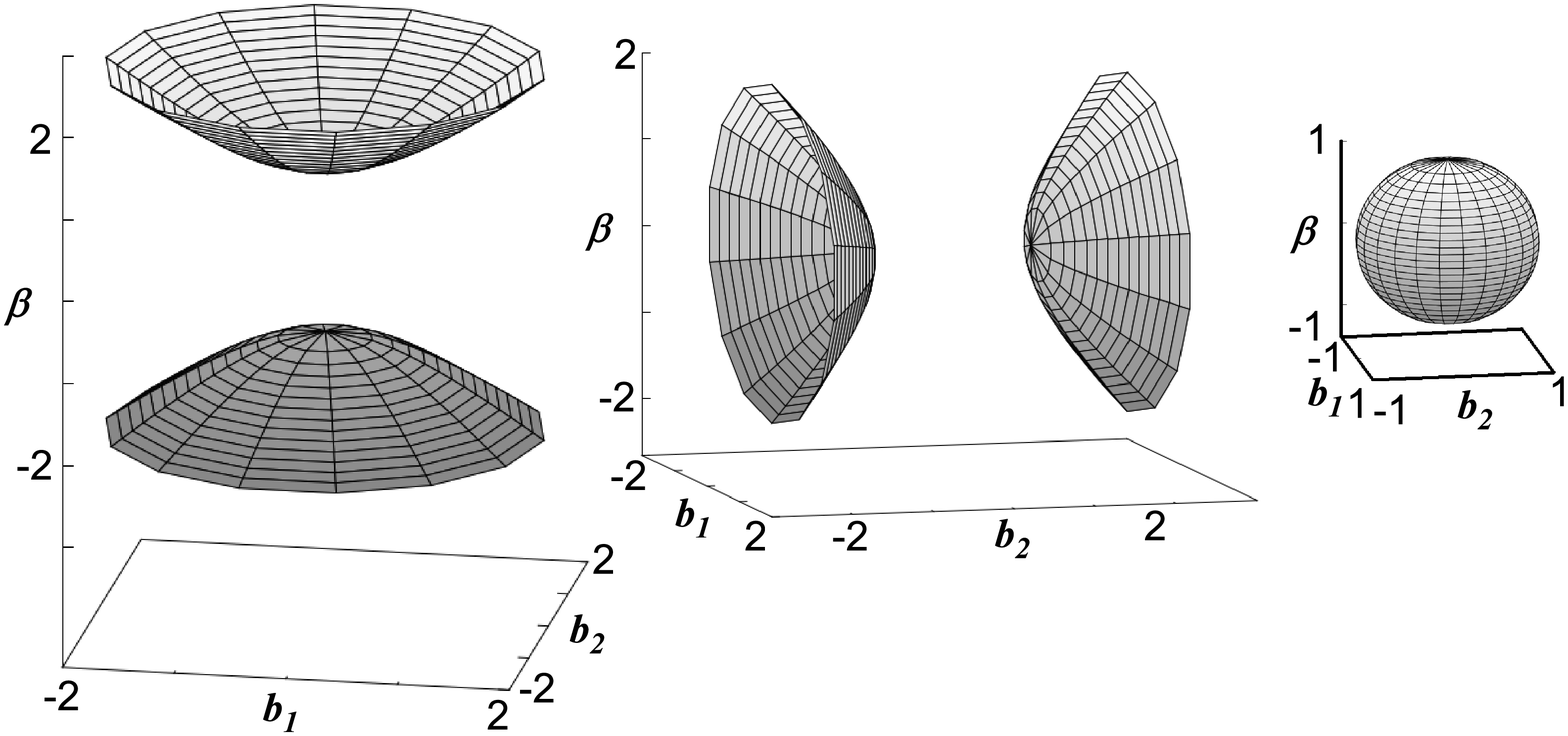}
    \caption{Manifolds of square roots $f$ of $-1$ in $\cl(2,0)$ (left), $\cl(1,1)$ (center), and $\cl(0,2)\cong\H$ (right). The square roots are $f=\alpha + b_1 e_1+b_2e_2+\beta e_{12},$ with $\alpha, b_1, b_2, \beta \in \R$, $\alpha=0$, and $\beta^2=b_1^2e_2^2+b_2^2e_1^2+e_1^2e_2^2$.}  
    \label{fg:Cln=2}
  \end{center}
\end{figure}

First, we introduce necessary background knowledge of Clifford algebras and matrix ring isomorphisms and explain in more detail how we will characterize and classify the square roots of $-1$ in Clifford algebras in Section~\ref{sc:intro}. Next, we treat section by section (in Sections \ref{sc:M2dR} to \ref{sc:M2dC}) the square roots of $-1$ in Clifford algebras which are isomorphic to matrix algebras  with associated rings $\R$, $\H$, $\R^2$, $\HQ^2$, and $\C$, respectively. The term \textit{associated} means that the isomorphic matrices will only have matrix elements from the associated ring. The square roots of $-1$ in Section \ref{sc:M2dC} with associated ring $\C$ are of particular interest, because of the existence of classes of \textit{exceptional} square roots of $-1$, which all include a nontrivial term in the central element of the respective algebra different from the identity. Section \ref{sc:M2dC} therefore includes a detailed discussion of all classes of square roots of 
$-1$ in the algebras $\cl(4,1)$, the isomorphic $\cl(0,5)$, and in $\cl(7,0)$. Finally, we add appendix \ref{AppendA} with tables of square roots of $-1$ for all Clifford algebras with $n=5,7$, and 
%$s=3 (\text{mod} \,\,4)$. 
$s=3 \, (\mymod 4)$. The square roots of $-1$ in Section \ref{sc:M2dC} and in Appendix~\ref{AppendA} were all computed with the Maple package CLIFFORD~\cite{AF:CLIFFORD}, as explained 
in Appendix~\ref{AppendB}.

\section{Background and problem formulation} 
\label{sc:intro}
Let $\cl(p, q)$ be the algebra (associative with unit $1$) generated over $\R$ by $p + q$ elements $e_k$ (with $k = 1, 2,\ldots, p + q)$ with the relations $e^2_k = 1$ if $k \leq p$, $e^2_k = -1$ if 
$k > p$ and $e_he_k + e_ke_h = 0$ whenever $h \neq k$, see \cite{PL:CAaSpin}. We set the vector space dimension $n = p + q$ and the signature $s = p - q$. This algebra has dimension $2^n$, and its even subalgebra $\cl_0(p, q)$ has dimension  $2^{n-1}$ (if $n > 0$). We are concerned with square roots of $-1$ contained in $\cl(p, q)$ or $\cl_0(p, q)$. If the dimension of $\cl(p, q)$ or, 
$\cl_0(p, q)$ is $\leq 2$, it is isomorphic to $\R\cong \cl(0,0)$, $\R^2\cong \cl(1,0)$, or $\C\cong \cl(0,1)$, and it is clear that there is no square root of $-1$ in $\R$ and 
$\R^2 = \R \times \R$, and that there are two squares roots $i$ and $-i$ in $\C$. Therefore we only consider algebras of dimension $\geq 4$. Square roots of $-1$ have been computed explicitly in \cite{SJS:Biqroots} for $\cl(3,0)$, and in \cite{HA:GeoRoots-1} for algebras of dimensions $2^n\leq 16$.

An algebra $\cl(p, q)$ or $\cl_0(p, q)$ of dimension $\geq 4$ is isomorphic to one of the five matrix algebras: $\M(2d, \R)$, $\M(d,\HQ)$, $\M(2d, \R^2)$, $\M(d,\HQ^2)$ or $\M(2d, \C)$. The integer $d$ depends on $n$. According to the parity of $n$, it is either $2^{(n-2)/2}$ or $2^{(n-3)/2}$ for $\cl(p, q)$, and, either $2^{(n-4)/2}$ or $2^{(n-3)/2}$ for $\cl_0(p, q)$. The associated ring 
(either $\R$, $\HQ$, $\R^2$, $\HQ^2$, or $\C$) depends on $s$ in this way\footnote{Compare chapter 16 on \textit{matrix representations and periodicity of 8}, as well as Table 1 on p. 217 of \cite{PL:CAaSpin}.}:
\begin{table}[h!]
\begin{center}
\begin{tabular}{r|c|c|c|c|c|c|c|c|}
  $s$ mod 8 & 0 & 1 & 2 & 3 & 4 & 5 & 6 & 7 \\
  \hline 
  \rule{0mm}{4.5mm}%
  associated ring for $\cl(p,q)$ 
  & $\R$ & $\R^2$ & $\R$ & $\C$ & $\HQ$ & $\HQ^2$ & $\HQ$ & $\C$ \\
  \hline 
  \rule{0mm}{4.5mm}%
  associated ring for $\cl_0(p,q)$ 
  & $\R^2$ & $\R$ & $\C$ & $\HQ$ & $\HQ^2$ & $\HQ$ & $\C$ & $\R$ \\
  \hline
\end{tabular}
\end{center}
\end{table}

\noindent
Therefore we shall answer this question: What can we say about the square roots of $-1$ in an algebra $\A$ that is isomorphic to $\M(2d, \R)$, $\M(d,\HQ)$, $\M(2d, \R^2)$, $\M(d,\HQ^2)$, or, $\M(2d, \C)$? They constitute an algebraic submanifold in $\A$; how many connected components\footnote{Two points are in the same connected component of a manifold, if they can be joined by a continuous path inside the manifold under consideration. (This applies to all topological spaces satisfying the property that each neighborhood of any point contains a neighborhood in which every pair of points can always be joined by a continuous path.)}
%%% More general definition:
%\footnote{\textit{Connected components} of a nonempty topological space are its maximal connected subsets. The components of any topological space $X$ form a partition of $X$: they are disjoint, nonempty, and their union is the whole space. Every component is a closed subset of the original space. It follows that, in the case where their number is finite, each component is also an open subset. (Adapted from \cite{Wiki:ConnSpace}.)} 
(for the usual topology) does it contain? Which are their dimensions? This submanifold is invariant by the action of the \textit{group} $\Inn(\A)$ of \textit{inner automorphisms}\footnote{An inner automorphism $f$ of $\A$ is defined as $f:\A \rightarrow \A, f(x) = a^{-1}xa, \forall x \in \A$, with given fixed $a\in \A$. The composition of two inner automorphisms $g(f(x))=b^{-1}a^{-1}xab = (ab)^{-1}x(ab)$ is again an inner automorphism. With this operation the inner automorphisms form the group $\Inn(\A)$, compare \cite{WP:innaut}.} of $\A$, i.e. for every $ r\in \A, r^2 = -1 \Rightarrow f(r)^2=-1\,\,\, \forall f\in \Inn(\A)$. The orbits of $\Inn(\A)$ are called conjugacy classes\footnote{The conjugacy class (similarity class) of a given $r\in \A, r^2 = -1$ is $\{f(r): f\in \Inn(\A) \}$, compare \cite{WP:conjclass}. Conjugation is transitive, because the composition of inner automorphisms is again an inner automorphism.}; how many conjugacy classes are there in this submanifold? If the associated ring is $\R^2$ or $\HQ^2$ or $\C$, the group $\Aut(\A)$ of all automorphisms of $\A$ is larger than $\Inn(\A)$, and the action of $\Aut(\A)$ in this submanifold shall also be described.

We recall some properties of $\A$ that do not depend on the associated ring. The group $\Inn(\A)$ contains as many connected components as the \textit{group} $\G(\A)$ of \textit{invertible elements} in $\A$. We recall that this assertion is true for $\M(2d, \R)$ but not for $\M(2d+1, \R)$ which is not one of the relevant matrix algebras. If $f$ is an element of $\A$, let $\Cent(f)$ be the centralizer of $f$, that is, the subalgebra of all $g \in \A$ such that $fg = gf$. The conjugacy class of $f$ contains as many connected components\footnote{According to the general theory of groups acting on sets, the conjugacy class (as a topological space) of a square root $f$ of $-1$ is isomorphic to the \textit{quotient} of $\G(\A)$ and $\Cent(f)$ (the subgroup of stability of $f$). Quotient means here the set of left handed classes modulo the subgroup. If the subgroup is contained in the neutral connected component of $\G(\A)$, then the number of connected components is the same in the quotient as in $\G(\A)$. See also \cite{CC:LieGroups}.} as $\G(\A)$ if (and only if) $\Cent(f)\bigcap\G(\A)$ is contained in the neutral\footnote{\textit{Neutral} means to be connected to the identity element of $\A$.} connected component of $\G(\A)$, and the dimension of its conjugacy class is 
\be 
\dim(\A) - \dim(\Cent(f)).
\label{eq:dimconjclass}
\ee 
Note that for invertible $g \in \Cent(f)$ we have $g^{-1}fg = f$.

Besides, let $\Zof(\A)$ be the center of $\A$, and let $[\A,\A]$ be the subspace spanned by all $[f, g] = fg-gf$. In all cases $\A$ is the direct sum of $\Zof(\A)$ and $[\A,\A]$. For 
example,\footnote{A matrix algebra based proof is e.g., given in \cite{uwp:center}.} $\Zof(\M(2d,\R)) = \{a\unitm\mid a\in \R\}$ and $\Zof(\M(2d,\C)) = \{c\unitm\mid c\in \C\}$. If the associated ring is $\R$ or $\HQ$ (that is for even $n$), then $\Zof(\A)$ is canonically isomorphic to $\R$, and from the projection $\A \rightarrow \Zof(\A)$ we derive a linear form $\Scal : \A \rightarrow \R$. When the associated ring\footnote{This is the case for $n$ (and  $s$) odd. Then the pseudoscalar $\omega \in \cl(p,q)$ is also in $\Zof(\cl(p,q))$.} is $\R^2$ or $\HQ^2$ or $\C$, then $\Zof(\A)$ is spanned by $\unitm$ (the unit matrix\footnote{The number $1$ denotes the unit of the Clifford algebra $\A$, whereas the bold face $\unitm$ denotes the unit of the isomorphic matrix algebra $\M$.}) and some element $\omega$ such that $\omega^2 = \pm \unitm$. Thus, we get two linear forms $\Scal$ and $\Spec$ such that $\Scal(f)\unitm+\Spec(f)\omega$ is the projection of $f$ in $\Zof(\A)$ for every $f \in \A$. Instead of $\omega$ we may use $-\omega$ and replace $\Spec$ with $-\Spec$. The following assertion holds for every $f \in \A$: The trace of each multiplication\footnote{These multiplications are bilinear over the center of $\A$.} $g \mapsto fg$ or $g \mapsto gf$ is equal to the product 
\be 
\trace{f} = \dim(\A)\, \Scal(f).
\label{eq:trace}
\ee 
The word ``trace" (when nothing more is specified) means a matrix trace in $\R$, which is the sum of its diagonal elements. For example, the matrix $M\in \mathcal{M}(2d,\R)$ with elements 
$m_{kl} \in \R, 1\leq k,l\leq 2d$ has the trace $\trace{M} = \sum_{k=1}^{2d}m_{kk}$~\cite{HJ:MatAn}. 

We shall prove that in all cases $\Scal(f) = 0$ for every square root of $-1$ in $\A$. Then, we may distinguish \textit{ordinary} square roots of $-1$, and \textit{exceptional} ones. In all cases the ordinary \label{pg:ordinary} square roots of $-1$ constitute a unique\footnote{Let $\A$ be an algebra $\M(m,\K)$ where $\K$ is a division ring. Thus two elements $f$ and $g$ of $\A$ induce $\K$-linear endomorphisms $f'$ and $g'$ on $\K^m$; if $\K$ is not commutative, $\K$ operates on $\K^m$ on the right side. The matrices $f$ and $g$ are conjugate (or similar) if and only if there are two $\K$-bases $B_1$ and $B_2$ of $\K^m$ such that $f'$ operates on $B_1$ in the same way as $g'$ operates on $B_2$. This theorem allows us to recognize that in all cases but the last one (with exceptional square roots of $-\unitm$), two square roots of $-\unitm$ are always conjugate.} conjugacy class of dimension $\dim(\A)/2$ which has as many connected components as $\G(\A)$, and they satisfy the equality $\Spec(f) = 0$ if the associated ring is $\R^2$ or $\HQ^2$ or $\C$. The exceptional square roots of $-1$ only exist\footnote{The pseudoscalars of Clifford algebras whose isomorphic matrix algebra has ring  $\R^2$ or $\HQ^2$ square to $\omega^2=+1$. } if $\A \cong \M(2d,\C)$. In $\M(2d,\C)$ there are $2d$ conjugacy classes of exceptional square roots of $-1$, each one characterized by an equality $\Spec(f) = k/d$ with $\pm k \in \{1, 2, \ldots, d\}$ [see Section \ref{sc:M2dC}], and their dimensions are $< \dim(\A)/2$ [see eqn. \eqref{eq:M2dCRdim}]. For instance, $\omega$ (mentioned above) and  $-\omega$  are central square roots of $-1$ in $\M(2d, \C)$ which constitute two conjugacy classes of dimension~$0$. Obviously, $\Spec(\omega) = 1$.

For symbolic computer algebra systems (CAS), like MAPLE, there exist Clifford algebra packages, e.g., CLIFFORD \cite{AF:CLIFFORD}, which can compute idempotents \cite{AFPR:idem} and square roots of 
$-1$. This will be of especial interest for the exceptional square roots of $-1$ in $\M(2d, \C)$. 

Regarding a square root $r$ of $-1$, a Clifford algebra is the direct sum of the subspaces $\mathrm{Cent}(r)$ (all elements that commute with $r$) and the skew-centralizer $\mathrm{SCent}(r)$ (all elements that anticommute with $r$). Every Clifford algebra multivector has a unique split by this Lemma.
\begin{lem}
Every multivector $A \in \cl(p,q)$ has, with respect to a square root $r\in \cl(p,q)$ of~$-1$, i.e., $r^{-1}=-r,$ the unique decomposition
\be 
A_{\pm} = \frac{1}{2}(A \pm r^{-1}Ar), \quad A = A_++A_-, \quad A_+r = r A_+, \quad A_-r = -rA_-. 
\ee
\end{lem}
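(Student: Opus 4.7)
The plan is to recognise the stated decomposition as the standard eigenspace splitting associated with an involution. Define the linear map $\sigma : \cl(p,q) \to \cl(p,q)$ by $\sigma(A) = r^{-1}Ar$. Since $r^2 = -1$ forces $r^{-1} = -r$, a direct computation gives $\sigma^2(A) = r^{-1}(r^{-1}Ar)r = r^{-2} A r^{2} = A$, so $\sigma$ is an involutive algebra automorphism of $\cl(p,q)$.

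Once $\sigma$ is known to be involutive, every element of $\cl(p,q)$ splits as the sum of its $(+1)$- and $(-1)$-eigenvector components for $\sigma$, with the projections given by the standard formulas $A_{\pm} = \tfrac{1}{2}(A \pm \sigma(A)) = \tfrac{1}{2}(A \pm r^{-1}Ar)$, and $A = A_+ + A_-$ follows by inspection. It then remains to translate the eigenvalue conditions $\sigma(A_\pm) = \pm A_\pm$ into the commutation conditions of the lemma: multiplying $\sigma(A_+) = A_+$ on the right by $r$ yields $r^{-1}A_+ r^2 = A_+ r$, and since $r^2 = -1$ this rearranges to $rA_+ = A_+r$; the analogous manipulation for $A_-$ gives $rA_- = -A_-r$.

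For uniqueness, suppose $A = B + C$ with $Br = rB$ and $Cr = -rC$. Applying $\sigma$ gives $\sigma(B) = B$ and $\sigma(C) = -C$, so $A + \sigma(A) = 2B$ and $A - \sigma(A) = 2C$, forcing $B = A_+$ and $C = A_-$. The only point that needs any care is the algebraic identity $r^{-1}X r^2 = -r^{-1}X = rX$, which is where the hypothesis $r^{2} = -1$ (equivalently $r^{-1} = -r$) enters; everything else is the formal eigenspace decomposition for an involution. There is no real obstacle, as the argument is essentially a one-line verification once $\sigma$ is introduced.
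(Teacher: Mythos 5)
Your proof is correct. At its computational core it is the same verification the paper performs --- the paper's entire proof is the one-line calculation $A_{\pm}r = \tfrac{1}{2}(A \pm r^{-1}Ar)r = \pm r\,\tfrac{1}{2}(A \pm r^{-1}Ar)$, using $r^{-1}=-r$ --- but you package it differently, as the $(\pm 1)$-eigenspace decomposition of the involution $\sigma(A)=r^{-1}Ar$ with $\sigma^2=\mathrm{id}$. That repackaging buys you something the paper does not actually deliver: the lemma asserts that the decomposition is \emph{unique}, and the paper's proof only checks the commutation relations $A_{\pm}r=\pm rA_{\pm}$, never addressing uniqueness (nor the trivial identity $A=A_++A_-$). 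Your final paragraph, showing that any decomposition $A=B+C$ with $Br=rB$ and $Cr=-rC$ forces $B=\tfrac12(A+\sigma(A))$ and $C=\tfrac12(A-\sigma(A))$, supplies exactly that missing piece. The only caveat is a small overstatement of your own: you call $\sigma$ an ``algebra automorphism,'' which is true but irrelevant here --- all that is used is that $\sigma$ is a linear involution --- so nothing in your argument depends on that claim.
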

\begin{proof}
For $A \in \cl(p,q)$ and a square root $r\in \cl(p,q)$ of $-1$, we compute
\begin{align*} 
A_{\pm}r = \frac{1}{2}(A \pm r^{-1}Ar)r &= \frac{1}{2}(Ar \pm r^{-1}A(-1))\stackrel{r^{-1}=-r}{=}\frac{1}{2}(rr^{-1}Ar \pm rA)\\ 
                                        &= \pm r \frac{1}{2}(A \pm r^{-1}Ar).
\end{align*}
\renewcommand{\qedsymbol}{} 
\end{proof}
For example, in Clifford algebras $\cl(n,0)$ \cite{HM:CFToMVF} of dimensions $n=2 \, \mymod 4$, $\mathrm{Cent}(r)$ is the even subalgebra $\cl_{0}(n,0)$ for the unit pseudoscalar $r$, and the subspace $\cl_1(n,0)$ spanned by all $k$-vectors of odd degree $k$, is  $\mathrm{SCent}(r)$. The most interesting case is $\M(2d, \C)$, where a whole range of conjugacy classes becomes available. These results will therefore be particularly relevant for constructing \textit{Clifford Fourier transformations} using the square roots of $-1$. 

\section{Square roots of $-1$ in $\M(2\lowercase{d}, \R)$}
%\section{Square roots of $-1$ in $\M(\MakeLowercase{2d}, \R)$}
\label{sc:M2dR}
Here $\A =\M(2d, \R)$, whence $\dim(\A) = (2d)^2 = 4d^2$. The group $\G(\A)$ has \textit{two} connected components determined by the inequalities $\det(g) > 0$ and $\det(g) < 0$.

For the case $d=1$ we have, e.g., the algebra $\cl(2,0)$ isomorphic to $\M(2, \R)$. The basis $\{1, e_1, e_2, e_{12}\}$ of $\cl(2,0)$ is mapped to 
$$
\left\{
%\left(
%\begin{array}{cc}
%1 & 0 \\
%0 & 1
%\end{array}
%\right),
\begin{pmatrix}
1 & 0 \\
0 & 1
\end{pmatrix},
%\left(
%\begin{array}{cc}
%0 & 1 \\
%1 & 0
%\end{array}
%\right),
\begin{pmatrix}
0 & 1 \\
1 & 0
\end{pmatrix},
%\left(
%\begin{array}{cc}
%1 & 0 \\
%0 & -1
%\end{array}
%\right),
\begin{pmatrix}
1 & 0 \\
0 & -1
\end{pmatrix},
%\left(
%\begin{array}{cc}
%0 & -1 \\
%1 & 0
%\end{array}
%\right)
\begin{pmatrix}
0 & -1 \\
1 & 0
\end{pmatrix}
\right\}.
$$ 
The general element $\alpha + b_1 e_1 + b_2 e_2 +\beta e_{12} \in \cl(2,0)$ is thus mapped to 
\be 
%\left(
%\begin{array}{cc}
%\alpha+b_2 & -\beta+b_1 \\
%\beta+b_1 & \alpha-b_2
%\end{array}
%\right)
\begin{pmatrix}
\alpha+b_2 & -\beta+b_1 \\
\beta+b_1 & \alpha-b_2
\end{pmatrix}
\label{eq:M2Rmat}
\ee 
in $\M(2, \R)$. Every element $f$ of $\A =\M(2d, \R)$ is treated as an $\R$-linear endomorphism of $V = \R^{2d}$. Thus, its scalar component and its trace \eqref{eq:trace} are related as follows: $\trace{f} = 2d \Scal(f)$. If $f$ is a square root of $-\unitm$, it turns $V$ into a vector space over $\C$ (if the complex number $i$ operates like $f$ on~$V$). If $(e_1, e_2, \ldots , e_d)$ is a 
$\C$-basis of $V$, then $(e_1, f(e_1), e_2, f(e_2), \ldots , e_d, f(e_d))$ is a $\R$-basis of $V$, and the $2d \times 2d$ matrix of $f$ in this basis is
\be
%  \left(
%  \begin{array}{ccccccc}
%    0 & -1 &0& 0& \ldots &0 &0 \\
%    1 &0& 0& 0& \ldots &0& 0\\
%    0 &0& 0& -1 &\ldots& 0 &0\\
%    0 &0 &1 &0 &\ldots &0& 0\\
%    &\ldots&& \ldots& \ldots &&\ldots\\
%    &\ldots&& \ldots& \ldots &&\ldots\\
%    0& 0& 0& 0& \ldots& 0& -1\\
%    0 &0 &0 &0 & \ldots  &1 &0
%  \end{array}
%  \right)
\mathrm{diag} \bigg(\underbrace{\left(\begin{matrix} 0 & -1\\1 & 0\end{matrix}\right),\ldots,\left(\begin{matrix} 0 & -1\\1 & 0\end{matrix}\right)}_{d}\bigg)
  %\begin{pmatrix}
  %  0 & -1 &0& 0& \ldots &0 &0 \\
  %  1 &0& 0& 0& \ldots &0& 0\\
  %  0 &0& 0& -1 &\ldots& 0 &0\\
  %  0 &0 &1 &0 &\ldots &0& 0\\
  %  &\ldots&& \ldots& \ldots &&\ldots\\
  %  &\ldots&& \ldots& \ldots &&\ldots\\
  %  0& 0& 0& 0& \ldots& 0& -1\\
  %  0 &0 &0 &0 & \ldots  &1 &0
  %\end{pmatrix}.
  \label{eq:M2dRimat}
\ee
Consequently all square roots of $-\unitm$ in $\A$ are conjugate. The centralizer of a square root~$f$ of~$-\unitm$ is the algebra of all $\C$-linear endomorphisms $g$ of $V$ (since $i$ operates like $f$ on $V$). Therefore, the $\C$-dimension of $\Cent(f)$ is $d^2$ and its 
$\R$-dimension is $2d^2$. Finally, the dimension \eqref{eq:dimconjclass} of the conjugacy class of $f$ is $\dim(\A)-\dim(\Cent(f))=4d^2 - 2d^2 = 2d^2 = \dim(\A)/2$. The two connected components of $\G(\A)$ are determined by the sign of the determinant. Because of the next lemma, the $\R$-determinant of every element of $\Cent(f)$ is $\geq 0$. Therefore, the intersection 
$\Cent(f)\bigcap \G(\A)$ is contained in the neutral connected component of $\G(\A)$ 
and, consequently, the conjugacy class of $f$ has two connected components like $\G(\A)$. Because of the next lemma, the $\R$-trace of $f$ vanishes (indeed its $\C$-trace is $di$, because $f$ is the multiplication by the scalar $i$: $f(v)=iv$ for all $v$) whence $\Scal(f) = 0$. This equality is corroborated by the matrix written above.

We conclude that the square roots of $-\unitm$ constitute one conjugacy class with two connected components of dimension $\dim(\A)/2$ contained in the hyperplane defined by the equation 
\be 
\Scal(f) = 0.
\label{eq:M2dRscal0}
\ee

Before stating the lemma that here is so helpful, we show what happens in the easiest case $d = 1$. The square roots of $-\unitm$ in $\M(2,\R)$ are the real matrices
\be
%\left(
%\begin{array}{cc}
%a & c \\
%b & -a
%\end{array}
%\right)
\begin{pmatrix}
a & c \\
b & -a
\end{pmatrix}
\mbox{ with }
%\left(
%\begin{array}{cc}
%a & c \\
%b & -a
%\end{array}
%\right)
%\left(
%\begin{array}{cc}
%a & c \\
%b & -a
%\end{array}
%\right)
\begin{pmatrix}
a & c \\
b & -a
\end{pmatrix}
\begin{pmatrix}
a & c \\
b & -a
\end{pmatrix}
=
(a^2 + bc) \,\unitm = -\unitm ;
\ee  
hence $a^2 + bc = -1$, a relation between $a, b, c$ which is equivalent to $(b - c)^2 = (b + c)^2 + 4a^2 + 4 \Rightarrow (b-c)^2\geq 4 \Rightarrow b-c \geq 2$ (one component) or $c-b \geq 2$ (second component). Thus, we recognize the two connected components of square roots of $-\unitm$: The inequality $b \geq c + 2$ holds in one connected component, and the inequality 
$c \geq b + 2$ in the other one, compare Fig.~\ref{fg:M2dR}.
\begin{figure}
  \begin{center}      
%\resizebox{0.4\textwidth}{!}{\includegraphics{M2dR1.eps}} %%%<<<--- Eckhard: Don't uncomment it!
%% uncommenting this line changes font in the text after the figure!
    \includegraphics[scale=0.5]{M2dR1}
    \caption{Two components of square roots of $-\unitm$ in $\M(2,\R)$}  
    \label{fg:M2dR}
  \end{center}
\end{figure}

In terms of $\cl(2,0)$ coefficients \eqref{eq:M2Rmat} with $b-c = \beta+b_1-(-\beta+b_1)=2\beta$, we get the two component conditions simply as 
\be 
\beta \geq 1  \quad \mbox{(one component)}, \qquad \beta \leq -1 \quad \mbox{ (second component)}.
\label{eq:M2Rcomps}
\ee 
Rotations ($\det(g)=1$) leave the pseudoscalar $\beta e_{12}$ invariant (and thus preserve the two connected components of square roots of $-\unitm$), but reflections ($\det(g')=-1$) change its sign 
$\beta e_{12}\rightarrow -\beta e_{12}$ (thus interchanging the two components). 

Because of the previous argument involving a complex structure on the real space $V$, we conversely consider the complex space $\C^d$ with its structure of vector space over $\R$. If 
$(e_1, e_2, \ldots , e_d)$ is a $\C$-basis of $\C^d$, then $(e_1, ie_1, e_2, ie_2, \ldots, e_d, ie_d)$ is a $\R$-basis. Let $g$ be a $\C$-linear endomorphism of $\C^d$ (i.e., a complex $d \times d$ matrix), let $\mathrm{tr}_{\C}(g)$ and $\det_{\C}(g)$ be the trace and determinant of $g$ in $\C$, and $\mathrm{tr}_{\R}(g)$ and $\det_{\R}(g)$ its trace and determinant for the real structure of $\C^d$.
\begin{ex}
For $d=1$ an endomorphism of $\C^1$ is given by a complex number $g=a+ib, \,a,b \in \R$. Its matrix representation is according to \eqref{eq:M2dRimat}
\be 
%  \left(
%  \begin{array}{cc}
%    a & -b \\
%    b & a
%  \end{array}
%  \right)
  \begin{pmatrix}
    a & -b \\
    b & a
  \end{pmatrix}
  \mbox{ with }
%  \left(
%  \begin{array}{cc}
%    a & -b \\
%    b & a
%  \end{array}
%  \right)^2
  \begin{pmatrix}
    a & -b \\
    b & a
  \end{pmatrix}^2
  =
  (a^2-b^2)
%  \left(
%  \begin{array}{cc}
%    1 & 0 \\
%    0 & 1
%  \end{array}
%  \right)
  \begin{pmatrix}
    1 & 0 \\
    0 & 1
  \end{pmatrix}
  +2ab
%  \left(
%  \begin{array}{cc}
%    0 & -1 \\
%    1 & 0
%  \end{array}
%  \right).
  \begin{pmatrix}
    0 & -1 \\
    1 & 0
  \end{pmatrix}.
\ee 
Then we have $\mathrm{tr}_{\C}(g)=a+ib$, 
$\mathrm{tr}_{\R}
%  \left(
%  \begin{array}{cc}
%    a & -b \\
%    b & a
%  \end{array}
%  \right)
  \begin{pmatrix}
    a & -b \\
    b & a
  \end{pmatrix}
=2a = 2\Re(\mathrm{tr}_{\C}(g))$
and $\det_{\C}(g) = a+ib$, 
$\det_{\R}
%  \left(
%  \begin{array}{cc}
%    a & -b \\
%    b & a
%  \end{array}
%  \right)
  \begin{pmatrix}
    a & -b \\
    b & a
  \end{pmatrix}
=a^2+b^2 = |\det_{\C}(g)|^2 \geq 0$.
\end{ex}
\begin{lem}
For every $\C$-linear endomorphism $g$ we can write $\mathrm{tr}_{\R}(g) = 2\Re(\mathrm{tr}_{\C}(g))$ and $\det_{\R}(g)$ $= | \det_{\C}(g) |^2 \geq 0$.
\end{lem}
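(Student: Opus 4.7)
The plan is to fix a $\C$-basis $(e_1,\ldots,e_d)$ of $\C^d$ and write the $\C$-matrix of $g$ as $M = A + iB$, where $A,B \in \M(d,\R)$. Passing to the ordered $\R$-basis $(e_1, ie_1, e_2, ie_2, \ldots, e_d, ie_d)$ of $\C^d$ and using $g(ie_k) = i\,g(e_k)$, a direct bookkeeping of coefficients shows that the $(j,k)$-th $2\times 2$ block of the real matrix of $g$ is
$$
\begin{pmatrix} a_{jk} & -b_{jk} \\ b_{jk} & \phm a_{jk} \end{pmatrix},
$$
exactly as in the example. This is the only computational setup I would perform; everything else is invariant reasoning.

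For the trace identity, I simply sum the diagonal entries of the real matrix: each diagonal $2\times 2$ block contributes $2a_{kk}$, so $\mathrm{tr}_{\R}(g) = 2\sum_k a_{kk} = 2\,\Re(\mathrm{tr}_{\C}(g))$. No further work is needed.

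For the determinant identity, the main obstacle is that the block structure above is not triangular in general, so the $\R$-determinant does not obviously factor. I would handle this by invoking Schur's theorem: there is a $\C$-unitary change of basis making $M$ upper triangular over $\C$ with diagonal $\lambda_1,\ldots,\lambda_d$. Since a $\C$-linear change of basis induces an $\R$-linear change of basis for the underlying real structure, and both $\det_{\R}(g)$ and $|\det_{\C}(g)|^2$ are invariant under such similarities, I may assume $M$ is upper triangular. The real matrix is then block upper triangular with diagonal blocks $\bigl(\begin{smallmatrix} \Re\lambda_k & -\Im\lambda_k \\ \Im\lambda_k & \phm\Re\lambda_k \end{smallmatrix}\bigr)$, each of real determinant $|\lambda_k|^2$. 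Hence
$$
\det_{\R}(g) \;=\; \prod_{k=1}^{d} |\lambda_k|^2 \;=\; \Bigl|\prod_{k=1}^{d} \lambda_k\Bigr|^2 \;=\; |\det_{\C}(g)|^2 \;\geq\; 0.
$$

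As an alternative I would keep in mind the explicit conjugation by $P = \tfrac{1}{\sqrt{2}}\bigl(\begin{smallmatrix} I & -iI \\ -iI & I\end{smallmatrix}\bigr)$, which block-diagonalizes $\bigl(\begin{smallmatrix} A & -B \\ B & \phm A\end{smallmatrix}\bigr)$ to $\mathrm{diag}(A+iB,\,A-iB)$; taking determinants then yields $\det_{\R}(g) = \det_{\C}(M)\,\overline{\det_{\C}(M)} = |\det_{\C}(g)|^2$. This avoids Schur but trades it for one verification of matrix arithmetic, which is the only place where a slip is likely; either route finishes the lemma.
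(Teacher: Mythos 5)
Your proposal is correct and follows essentially the same route as the paper: triangularize $g$ over $\C$, pass to the derived $\R$-basis by replacing each entry $a+bi$ with the block $\bigl(\begin{smallmatrix} a & -b \\ b & \phm a\end{smallmatrix}\bigr)$, and use the block-triangular determinant formula; the trace identity falls out of the diagonal blocks exactly as you say. Your explicit remark that both sides of the determinant identity are similarity-invariant (so one may assume the triangular form) is a point the paper leaves implicit, but it is the same argument.
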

\begin{proof} 
There is a $\C$-basis in which the $\C$-matrix of $g$ is triangular [then  $\det_{\C}(g)$ is the product of the entries of $g$ on the main diagonal]. We get the $\R$-matrix of $g$ in the derived $\R$-basis by replacing every entry $a+bi$ of the $\C$-matrix with the elementary matrix
%$ 
%  \left(
%  \begin{array}{cc}
%  a & -b \\
%  b & a
%  \end{array}
%  \right)
%$. 
$\begin{pmatrix}
  a & -b \\
  b & a
  \end{pmatrix}
$. 
The conclusion soon follows. The fact that the determinant of a block triangular matrix is the product of the determinants of the blocks on the main diagonal is used. 
\end{proof}

\section{Square roots of $-1$ in $\M(2\lowercase{d}, \R^2)$\label{sc:M2dR2}}

Here $\A = \M(2d, \R^2) = \M(2d,\R) \times \M(2d,\R)$, whence $\dim(\A) = 8d^2$. The group 
$\G(\A)$ has four\footnote{In general, the number of connected components of $\G(\A)$ is two if $\A=\M(m,\R)$, and one if $\A=\M(m,\C)$ or $\A=\M(m,\HQ)$, because in all cases every matrix can be joined by a continuous path to a diagonal matrix with entries $1$ or $-1$. When an algebra $\A$ is a direct product of two algebras $\mathcal{B}$ and $\mathcal{C}$, then $\G(\A)$ is the direct product of $\G(\mathcal{B})$ and $\G(\mathcal{C})$, and the number of connected components of $\G(\A)$ is the product of the numbers of connected components of $\G(\mathcal{B})$ and $\G(\mathcal{C})$. \label{fn:NoConComp}} connected components. Every element $(f, f') \in \A$ (with $f, f' \in \M(2d,\R)$) has a determinant in $\R^2$ which is obviously $(\det(f), \det(f'))$, and the four connected components of $\G(\A)$ are determined by the signs of the two components of $\det_{\R^2} (f, f')$.

The lowest dimensional example ($d=1$) is $\cl(2,1)$ isomorphic to $\M(2, \R^2)$. Here the pseudoscalar $\omega = e_{123}$ has square $\omega^2 = +1$. The center of the algebra is $\{1,\omega\}$ and includes the idempotents $\epsilon_{\pm}=(1{\pm}\omega)/2$, $\epsilon_{\pm}^2 = \epsilon_{\pm}$, $\epsilon_{+}\epsilon_{-}=\epsilon_{-}\epsilon_{+}=0$. The basis of the algebra can thus be written as 
$\{\epsilon_{+}, e_1\epsilon_{+}, e_2\epsilon_{+}, e_{12}\epsilon_{+}, \epsilon_{-}, e_1\epsilon_{-}, e_2\epsilon_{-}, e_{12}\epsilon_{-}\}$, where the first (and the last) four elements form a basis of the subalgebra $\cl(2,0)$ isomorphic to $\M(2,\R)$. In terms of matrices we have the identity matrix $(\unitm,\unitm)$ representing the scalar part, the idempotent matrices $(\unitm,0)$, 
$(0,\unitm)$, and the $\omega$ matrix $(\unitm,-\unitm)$, with~$\unitm$ the unit matrix of $\M(2,\R)$.

The square roots of $(-\unitm,-\unitm)$ in $\A$ are pairs of two square roots of $-\unitm$ in $\M(2d,\R)$. Consequently they constitute a unique conjugacy class with four connected components 
of dimension $4d^2 = \dim(\A)/2$. This number can be obtained in two ways. First, since every element $(f, f') \in \A$ (with $f, f' \in \M(2d,\R)$) has twice the dimension of the components $f \in \M(2d,\R)$ of Section \ref{sc:M2dR}, we get the component dimension $2\cdot 2d^2 = 4d^2$. Second, the centralizer $\Cent(f,f')$ has twice the dimension of $\Cent(f)$ of $\M(2d,\R)$, therefore 
$\dim(\A)-\Cent(f,f') = 8d^2 -4d^2 = 4d^2$. In the above example for $d=1$ the four components are characterized according to \eqref{eq:M2Rcomps} by the values of the coefficients of $\beta e_{12}\epsilon_{+}$ and $\beta{}' e_{12}\epsilon_{-}$ as
%\begin{align}
%  c_1: & \quad \beta \geq 1, \quad \beta{}'\geq 1,  \nonumber \\
%  c_2: & \quad \beta \geq 1, \quad \beta{}'\leq -1, \nonumber \\
%  c_3: & \quad \beta \leq -1, \quad \beta{}'\geq 1, \nonumber \\
%  c_4: & \quad \beta \leq -1, \quad \beta{}'\leq -1 .
%  \label{eq:M2dRcomps}
%\end{align}
\begin{alignat}{2}
  c_1: \quad \beta &\geq 1,         &\beta' &\geq 1,  \nonumber \\
  c_2: \quad \beta &\geq 1,         &\beta' &\leq -1, \nonumber \\
  c_3: \quad \beta &\leq -1,        &\beta' &\geq 1,  \nonumber \\
  c_4: \quad \beta &\leq -1, \qquad &\beta' &\leq -1.
  \label{eq:M2dRcomps}
\end{alignat}
For every $(f,f') \in \A$ we can with \eqref{eq:trace} write $\trace{f} + \trace{f'} = 2d \Scal(f,f')$ and
\be 
\trace{f} - \trace{f'}  = 2d \Spec(f, f') \quad \mbox{if} \quad \omega = (\unitm,-\unitm) ;
\label{eq:M2dR2}
\ee
whence $\Scal(f, f') = \Spec(f, f') = 0$ if $(f, f')$ is a square root of $(-\unitm,-\unitm)$, compare \eqref{eq:M2dRscal0}.

The group $\Aut(\A)$ is larger than $\Inn(\A)$, because it contains the swap automorphism $(f,f')$ $\mapsto (f',f)$ which maps the central element $\omega$ to $-\omega$, and interchanges the two idempotents $\epsilon_+$ and $\epsilon_-$. The group $\Aut(\A)$ has eight connected components which permute the four connected components of the submanifold of square roots of $(-\unitm,-\unitm)$. The permutations induced by $\Inn(\A)$ are the permutations of the \textit{Klein group}. For example for $d=1$ of \eqref{eq:M2dRcomps} we get the following $\Inn(\M(2,\R^2))$ permutations
\begin{align}
  \det(g)>0, \quad \det(g')>0: &\quad \mbox{identity}, \nonumber \\
  \det(g)>0, \quad \det(g')<0: &\quad (c_1,c_2), (c_3,c_4), \nonumber \\
  \det(g)<0, \quad \det(g')>0: &\quad (c_1,c_3), (c_2,c_4), \nonumber \\
  \det(g)<0, \quad \det(g')<0: &\quad (c_1,c_4), (c_2,c_3). 
  \label{eq:M2dRKleingr}
\end{align}
%\begin{alignat}{3}
%  \det(g)>0, & &\det(g')>0: \quad \mbox{identity}, \nonumber \\
%  \det(g)>0, & &\det(g')<0: \quad (c_1,c_2), (c_3,c_4), \nonumber \\
%  \det(g)<0, & &\det(g')>0: \quad (c_1,c_3), (c_2,c_4), \nonumber \\
%  \det(g)<0, & &\det(g')<0: \quad (c_1,c_4), (c_2,c_3). 
%  \label{eq:M2dRKleingr}
%\end{alignat}
Beside the identity permutation, $\Inn(\A)$ gives the three permutations that permute two elements and also the other two ones. 

The automorphisms outside $\Inn(\A)$ are 
\be 
(f,f') \mapsto (gf'g^{-1}, g'fg'^{-1}) \quad \mbox{for some} \quad (g, g') \in \G(\A).
\label{eq:M2dRoAut}
\ee 
If $\det(g)$ and $\det(g')$ have opposite signs, it is easy to realize that this automorphism induces a circular permutation on the four connected components of square roots of 
$(-\unitm,-\unitm)$: If $\det(g)$ and $\det(g')$ have the same sign, this automorphism leaves globally invariant two connected components, and permutes the other two ones. For example, 
for $d=1$ 
%of 
the automorphisms \eqref{eq:M2dRoAut}  outside $\Inn(\A)$ permute the components \eqref{eq:M2dRcomps} of square roots of $(-\unitm,-\unitm)$ in $\M(2,\R^2)$ as follows
\begin{align}
  \det(g)>0, \quad \det(g')>0: &\quad (c_1), (c_2,c_3), (c_4), \nonumber \\
  \det(g)>0, \quad\det(g')<0:  &\quad c_1 \rightarrow c_2 \rightarrow c_4 \rightarrow c_3 \rightarrow c_1, \nonumber \\
  \det(g)<0, \quad \det(g')>0: &\quad c_1 \rightarrow c_3 \rightarrow c_4 \rightarrow c_2 \rightarrow c_1, \nonumber \\
  \det(g)<0, \quad \det(g')<0: &\quad (c_1,c_4), (c_2), (c_3). 
  \label{eq:M2RoAut}
\end{align}
Consequently, the quotient of the group $\Aut(\A)$ by its neutral connected component is isomorphic to the group of isometries of a square in a Euclidean plane.

\section{Square roots of $-1$ in $\M(\lowercase{d},\HQ)$\label{sc:MdH}}
Let us first consider the easiest case $d = 1$, when $\A = \HQ$, e.g., of $\cl(0,2)$. The square roots of $-1$ in $\HQ$ are the quaternions $ai + bj + cij$ with $a^2 + b^2 + c^2 = 1$. They constitute a compact and connected manifold of dimension $2$. Every square root $f$ of $-1$ is conjugate with~$i$, i.e., there exists $v\in \HQ: v^{-1} f v = i \Leftrightarrow fv = vi$. 
If we set $v = -fi +1 = a +bij -cj +1$ we have 
$$
fv = -f^2i+f = f+i = (f(-i) +1)i = vi.
$$ 
$v$ is invertible, except when $f=-i$. But $i$ is conjugate with $-i$ because $ij = j(-i)$, hence, by transitivity $f$ is also conjugate with $-i$.

Here $\A =\M(d,\HQ)$, whence $\dim(A) = 4d^2$. The ring $\HQ$ is the algebra over $\R$ generated by two elements $i$ and $j$ such that $i^2 = j^2 = -1$ and $ji = -ij$. We identify $\C$ with the subalgebra generated by\footnote{This choice is usual and convenient.} $i$ alone.

The group $\G(\A)$ has only \textit{one} connected component. We shall soon prove that every square root of $-\unitm$ in $\A$ is conjugate with $i\unitm$. Therefore, the submanifold of square roots of 
$-\unitm$ is a conjugacy class, and it is connected. The centralizer of $i\unitm$ in $\A$ is the subalgebra of all matrices with entries in $\C$. The $\C$-dimension of $\Cent(i\unitm)$ is $d^2$, its 
$\R$-dimension is $2d^2$, and, consequently, the dimension \eqref{eq:dimconjclass} of the submanifold of square roots of $-\unitm$ is 
$4d^2 - 2d^2 = 2d^2 = \dim(\A)/2$.

Here $V = \HQ^d$ is treated as a (unitary) \textit{module} over $\HQ$ on the \textit{right} side: The product of a line vector ${}^tv=(x_1, x_2, \ldots, x_d) \in V$ by $y \in \HQ$ is 
${}^tv\,y=(x_1y, x_2y, \ldots, x_dy)$. Thus, every $f \in \A$ determines an $\HQ$-linear endomorphism of $V$: The matrix $f$ multiplies the column vector $v={}^t(x_1, x_2, \ldots, x_d)$ on the left side $v\mapsto fv$. Since $\C$ is a subring of $\HQ$, $V$ is also a vector space of dimension $2d$ over $\C$. The scalar $i$ always operates on the right side (like every scalar in $\HQ$). If $(e_1, e_2, \ldots, e_d)$ is an $\HQ$-basis of $V$, then $(e_1, e_1j, e_2, e_2j, \ldots, e_d, e_dj)$ is a $\C$-basis of $V$. Let $f$ be a square root of $-\unitm$, then the eigenvalues of $f$ in $\C$ are $+i$ or $-i$. If we treat $V$ as a $2d$ vector space over $\C$, it is the direct ($\C$-linear) sum of the eigenspaces
\be 
V^+ = \{v \in V \mid f(v) = vi \} \quad \mbox{and} \quad V^- = \{v \in V \mid f(v) = -vi \},
\ee 
representing $f$ as a $2d\times 2d$ $\C$-matrix w.r.t. the $\C$-basis of $V$, with $\C$-scalar eigenvalues (multiplied from the right): $\lambda_{\pm} = \pm i$.

Since $ij = -ji$, the multiplication $v \mapsto vj$ permutes $V^+$ and $V^-$, as $f(v) = \pm vi$ is mapped to $f(v)j = \pm vij = \mp (vj)i$. Therefore, if $(e_1, e_2, \ldots, e_r)$ is a $\C$-basis of 
$V^+$, then $(e_1j, e_2j, \ldots, e_rj)$ is a $\C$-basis of $V^-$, consequently $(e_1, e_1j, e_2, e_2j, \ldots, e_r, e_rj)$ is a $\C$-basis of $V$, and $(e_1, e_2, \ldots, e_{r=d})$ is an $\HQ$-basis of $V$. Since $f$ by $f(e_k) = e_ki$ for $k = 1, 2, \ldots, d$ operates on the $\HQ$-basis $(e_1, e_2, \ldots, e_{d})$ in the same way as $i\unitm$ on the natural $\HQ$-basis of $V$, we conclude
that $f$ and  $i\unitm$ are conjugate. 

Besides, $\Scal(i\unitm) = 0$ because $2i\unitm = [ j\unitm , ij\unitm ]\in [\A,\A]$, thus $i\unitm \notin \Zof(\A)$. Whence,\footnote{Compare the definition of $\Scal(f)$ in Section \ref{sc:intro}, remembering that in the current section the associated ring is $\HQ$.} 
\be 
\Scal(f) = 0 \quad \mbox{for every square root of} \quad -\unitm .
\label{eq:MdH2Scal0}
\ee 

These results are easily verified in the above example of  $d = 1$ when $\A = \HQ$.

\section{Square roots of $-1$ in $\M(\lowercase{d}, \HQ^2)$\label{sc:MdH2}}
Here, $\A = \M(d, \HQ^2) = \M(d,\HQ) \times \M(d,\HQ)$, whence $\dim(A) = 8d^2$. The group $\G(\A)$ has only one connected component (see Footnote \ref{fn:NoConComp}).

The square roots of $(-\unitm,-\unitm)$ in $\A$ are pairs of two square roots of $-\unitm$ in $\M(d,\HQ)$. Consequently, they constitute a unique conjugacy class which is connected and its dimension is $2\times 2d^2 = 4d^2 = \dim(\A)/2$.

For every $(f, f') \in \A$ we can write $\Scal(f) + \Scal(f') = 2 \,\Scal(f, f')$ and, similarly to \eqref{eq:M2dR2},
\be 
\Scal(f) - \Scal(f') = 2 \,\Spec(f, f') \quad \mbox{if} \quad \omega = (\unitm,-\unitm);
\ee
whence $\Scal(f, f') = \Spec(f, f') = 0$ if $(f, f')$ is a square root of $(-\unitm,-\unitm)$, compare with~\eqref{eq:MdH2Scal0}.

The group $\Aut(\A)$ has two\footnote{Compare Footnote \ref{fn:NoConComp}.} connected components; the neutral component is $\Inn(\A)$, and the other component contains the swap automorphism $(f, f') \mapsto (f', f)$.

The simplest example is $d=1$, $\A = \HQ^2$, where we have the identity pair $(1,1)$ representing the scalar part, the idempotents $(1,0)$, $(0,1)$, and $\omega$ as the pair $(1,-1)$.

$\A = \HQ^2$ is isomorphic to $\cl(0,3)$. The pseudoscalar $\omega = e_{123}$ has the square $\omega^2=+1$. The center of the algebra is $\{1,\omega\}$, and includes the idempotents 
$\epsilon_{\pm}=\frac12(1{\pm}\omega)$, $\epsilon_{\pm}^2 = \epsilon_{\pm}$, $\epsilon_{+}\epsilon_{-}=\epsilon_{-}\epsilon_{+}=0$. The basis of the algebra can thus be written as 
$\{\epsilon_{+}, e_1\epsilon_{+}, e_2\epsilon_{+}, e_{12}\epsilon_{+}, \epsilon_{-}, e_1\epsilon_{-}, e_2\epsilon_{-}, e_{12}\epsilon_{-}\}$ where the first (and the last) four elements form a basis of the subalgebra $\cl(0,2)$ isomorphic to $\HQ$.

\section{Square roots of $-1$ in $\M(2\lowercase{d}, \C)$}
\label{sc:M2dC}
The lowest dimensional example for $d=1$ is the Pauli matrix algebra $\A=\M(2,\C)$ isomorphic to the geometric algebra $\cl(3,0)$ of the 3D Euclidean space and $\cl(1,2)$. The $\cl(3,0)$ vectors 
$e_1,e_2,e_3$ correspond one-to-one to the Pauli matrices
\be
\sigma_1=\begin{pmatrix}0 & 1\\1 & 0\end{pmatrix}, 
\qquad
\sigma_2=\begin{pmatrix}0 & -i\\i & 0\end{pmatrix}, 
\qquad
\sigma_3=\begin{pmatrix}1 & 0 \\0 & -1\end{pmatrix},
\ee
with $\sigma_1\sigma_2 = i\sigma_3 =\begin{pmatrix}i & 0\\0 & -i\end{pmatrix}$. The element $\omega = \sigma_1\sigma_2\sigma_3= i\unitm$ represents the central pseudoscalar $e_{123}$ of $\cl(3,0)$ 
with square $\omega^2=-\unitm$. The Pauli algebra has the following idempotents
\be 
\epsilon_{1} = \sigma_1^2 = \unitm , \qquad \epsilon_0 = \frac{1}{2}(\unitm+\sigma_3), \qquad \epsilon_{-1} = \zerom \,.
%\quad \epsilon_0^{\prime} = \frac{1}{2}(\unitm-\sigma_3).
\ee 
The idempotents correspond via 
\be 
  f= i(2\epsilon-\unitm),
  \label{eq:sr-1toidmpt}
\ee
to the square roots of $-\unitm$:
\be 
f_{1}=i\unitm = \begin{pmatrix}i & 0 \\0 & i\end{pmatrix},
\;
f_0 = i\sigma_3 = \begin{pmatrix}i & 0\\0 & -i\end{pmatrix},
\;
f_{-1}= -i\unitm = \begin{pmatrix}-i & 0\\0 & -i\end{pmatrix}, 
%\; 
%f^{\prime}_0 = -i\sigma_3,
\ee 
where by \textit{complex} conjugation $f_{-1}=\overline{f_1}$. Let the idempotent $\epsilon_0^{\prime} = \frac{1}{2}(\unitm-\sigma_3)$ correspond to the matrix $f^{\prime}_0 = -i\sigma_3.$
We observe that $f_0$ is conjugate to $f^{\prime}_0 = \sigma_1^{-1} f_0 \sigma_1 = \sigma_1\sigma_2 = f_0$ using $\sigma_1^{-1} = \sigma_1$ but $f_1$ is not conjugate to $f_{-1}$. Therefore, only 
$f_{1}, f_{0}, f_{-1}$ lead to three distinct conjugacy classes of square roots of $-\unitm$ in $\M(2,\C)$. Compare Appendix \ref{AppendB} for the corresponding computations with CLIFFORD for Maple. 

In general, if $\A =\M(2d, \C)$, then $\dim(\A) = 8d^2$. The group $\G(\A)$ has one connected component. The square roots of $-\unitm$ in $\A$ are in bijection with the idempotents~$\epsilon$~\cite{AFPR:idem} according to \eqref{eq:sr-1toidmpt}. According\footnote{On the other hand it is clear that complex conjugation always leads to $f_- = \overline{f_+}$, where the overbar means complex conjugation in $\M(2d,\C)$ and Clifford conjugation in the isomorphic Clifford algebra $\cl(p,q)$. So either the trivial idempotent $\epsilon_- = 0$ is included in the bijection \eqref{eq:sr-1toidmpt} of idempotents and square roots of $-\unitm$, or alternatively the square root of $-\unitm$ with $\Spec(f_-)=-1$ is obtained from $f_- = \overline{f_+}$.} to \eqref{eq:sr-1toidmpt} and its inverse $\epsilon = \frac{1}{2}(\unitm -if)$ the square root of $-\unitm$ with $\Spec(f_-)=k/d=-1$, i.e. $k=-d$ (see below), always corresponds to the trival idempotent $\epsilon_- = 0$, and the square root of $-\unitm$ with $\Spec(f_+)=k/d=+1$, $k=+d$, corresponds to the identity idempotent $\epsilon_+ = \unitm$. 

If $f$ is a square root of $-\unitm$, then $V = \C^{2d}$ is the direct sum of the eigenspaces\footnote{The following theorem is sufficient for a matrix $f$ in $\M(m,\K)$, if $\K$ is a (commutative) field. The matrix $f$ is diagonalizable if and only if $P(f)=0$ for some polynomial $P$ that has only simple roots, all of them in the field $\K$. (This implies that $P$ is a multiple of the minimal polynomial, but we do not need to know whether $P$ is or is not the minimal polynomial).} associated with the eigenvalues $i$ and $-i$. There is an integer $k$ such that the dimensions of the eigenspaces are respectively $d + k$ and $d - k$. Moreover, $-d \leq k \leq d$. Two square roots of $-\unitm$ are conjugate if and only if they give the same integer $k$. Then, all elements of 
$\Cent(f)$ consist of diagonal block matrices with $2$ square blocks of $(d+k)\times(d+k)$ matrices and $(d-k)\times(d-k)$ matrices. Therefore, the $\C$-dimension of $\Cent(f)$ is $(d+k)^2 + (d-k)^2$. Hence the $\R$-dimension \eqref{eq:dimconjclass} of the conjugacy class of $f$:
\be
8d^2 - 2(d+k)^2 - 2(d-k)^2 = 4 (d^2 - k^2).
\label{eq:M2dCRdim}
\ee 
Also, from the equality $\trace{f} = (d+k)i-(d-k)i=2ki$ we deduce that $\Scal(f) = 0$ and that $\Spec(f) = (2ki)/(2di) = k/d$ if $\omega = i\unitm$ (whence $\trace{\omega} = 2di$).

As announced on page \pageref{pg:ordinary}, we consider that a square root of $-\unitm$ is \textit{ordinary} if the associated integer $k$ vanishes, and that it is \textit{exceptional} if $k \neq 0$ . Thus the following assertion is true in all cases: the ordinary square roots of $-\unitm$ in $\A$ constitute one conjugacy class of dimension $\dim(\A)/2$ which has as many connected components as $\G(\A)$, and the equality $\Spec(f) = 0$ holds for every ordinary square root of $-\unitm$ when the linear form Spec exists. All conjugacy classes of exceptional square roots of $-\unitm$ have a dimension $< \dim(\A)/2$.

All square roots of $-\unitm$ in $\M(2d,\C)$ constitute $(2d+1)$ conjugacy classes\footnote{Two conjugate (similar) matrices have the same eigenvalues and the same trace. This suffices to recognize that $2d+1$ conjugacy classes are obtained.} which are also the connected components of the submanifold of square roots of $-\unitm$ because of the equality $\Spec(f) = k/d$, which is conjugacy class specific.

When $\A =\M(2d,\C)$, the group $\Aut(\A)$ is larger than $\Inn(\A)$ since it contains the complex conjugation (that maps every entry of a matrix to the conjugate complex number). It is clear that the class of ordinary square roots of $-\unitm$ is invariant by complex conjugation. But the class associated with an integer $k$ other than 0 is mapped by complex conjugation to the class associated with $-k$. In particular the complex conjugation maps the class $\{\omega\}$ (associated with $k = d$) to the class $\{-\omega\}$ associated with $k = -d$.

All these observations can easily verified for the above example of $d=1$ of the Pauli matrix algebra $\A=\M(2,\C)$. For $d=2$ we have the isomorphism of $\A =\M(4,\C)$ with $\cl(0,5)$,  
$\cl(2,3)$ and  $\cl(4,1)$. While $\cl(0,5)$ is important in Clifford analysis, $\cl(4,1)$ is both the geometric algebra of the Lorentz space $\R^{4,1}$ and the conformal geometric algebra of 3D Euclidean geometry. Its set of square roots of $-\unitm$ is therefore of particular practical interest. 

%%%%%%%%%%%%%%%%%%%%%%%%%%%%%%%%%%%%%%%%%%%%%%%%%%%%%%%%%%%%%%%%%%%%%%%%%%%%%%%%%%%%%%%%%
%%%Revised text of Example 4 and new examples and text added below including Appendix A
%%%%%%%%%%%%%%%%%%%%%%%%%%%%%%%%%%%%%%%%%%%%%%%%%%%%%%%%%%%%%%%%%%%%%%%%%%%%%%%%%%%%%%%%%
\begin{ex}
Let $\cl(4,1) \cong  \A$ where $\A = \M(4,\C)$ for $d=2$. The $\cl(4,1)$ 1-vectors can be represented\footnote{For the computations of this example in the Maple package CLIFFORD we
have used the identification $i=e_{23}$. Yet the results obtained for the
square roots of $-\unitm$ are independent of this setting (we can alternatively
use, e.g., $i=e_{12345}$, or the imaginary unit $i \in \C$), as
can easily be checked for $f_1$ of 
(\ref{eq:f31}),
%(7.7)
$f_0$ of 
(\ref{eq:f32})
%(7.8)
and $f_{-1}$ of 
(\ref{eq:Cl41f-1})
%(7.9)
by only assuming the standard Clifford product rules for $e_1$ to $e_5$. } by the following matrices:
\begin{gather}
e_1=\begin{pmatrix} 1 & 0 & 0 & 0\\ 0 & -1 & 0 & 0\\0 & 0 & -1 & 0\\0 & 0 & 0 & 1 \end{pmatrix},\; 
e_2=\begin{pmatrix} 0 & 1 & 0 & 0\\ 1 &  0 & 0 & 0\\0 & 0 &  0 & 1\\0 & 0 & 1 & 0 \end{pmatrix},\; 
e_3=\begin{pmatrix} 0 & -i & 0 & 0\\ i & 0 & 0 & 0\\0 & 0 &  0 & -i\\0 & 0 & i & 0 \end{pmatrix}, \notag \\
e_4=\begin{pmatrix} 0 & 0 & 1 & 0\\ 0 & 0 & 0 & -1\\1 & 0 &  0 & 0\\0 & -1 & 0 & 0 \end{pmatrix},\; 
e_5=\begin{pmatrix} 0 & 0 & -1 & 0\\ 0 & 0 & 0 & 1\\1 & 0 & 0 & 0\\0 & -1 & 0 & 0 \end{pmatrix}.
\label{eq:matr} 
\end{gather}
%under the identification $i = e_{23}.$ 
We find five conjugacy classes of roots $f_k$ of $-\unitm$ in $\cl(4,1)$ for $k \in \{0,\pm 1, \pm 2\}$: four exceptional and one ordinary. Since $f_k$ is a root of $p(t) = t^2+1$ which factors over $\C$ into $(t-i)(t+i)$, the minimal polynomial $m_k(t)$ of $f_k$ is one of the following: $t-i,$ $t+i,$ or $(t-i)(t+i).$ Respectively, there are three classes of characteristic polynomial $\Delta_k(t)$ of the matrix $\mathcal{F}_k$ in $\M(4,\C)$ which corresponds to $f_k$, namely, $(t-i)^4,$  $(t+i)^4,$ and $(t-i)^{n_1}(t+i)^{n_2},$ where $n_1+n_2 = 2d=4$ and 
$n_1=d+k=2+k$, $n_2=d-k=2-k$. As predicted by the above discussion, the ordinary root corresponds to $k=0$ whereas the exceptional roots correspond to $k \neq 0.$
\begin{enumerate}
\item For $k=2,$ we have $\Delta_2(t)= (t-i)^4,$ $m_2(t)= t-i,$ and so $\mathcal{F}_2 = \mathrm{diag}(i,i,i,i)$ which in the above representation~(\ref{eq:matr}) corresponds to the non-trivial central element $f_2=\omega = e_{12345}.$ Clearly, $\mathrm{Spec}(f_2)= 1 = \frac{k}{d}$; $\mathrm{Scal}(f_2)= 0;$ the $\C$-dimension of the centralizer $\mathrm{Cent}(f_2)$ is $16$; and the $\mathbb{R}$-dimension of the conjugacy class of $f_2$ is zero as it contains only $f_2$ since $f_2 \in \mathrm{Z}(\A).$ Thus, the $\R$-dimension of the class is again zero in agreement with~(\ref{eq:M2dCRdim}). 
\item For $k=-2,$ we have $\Delta_{-2}(t)= (t+i)^4,$ $m_{-2}(t)= t+i,$ and $\mathcal{F}_{-2} = \mathrm{diag}(-i,-i,-i,-i)$ which corresponds to the central element $f_{-2}=-\omega = -e_{12345}.$ Again, $\mathrm{Spec}(f_{-2})= -1 = \frac{k}{d}$; $\mathrm{Scal}(f_{-2})= 0;$ the $\C$-dimension of the centralizer $\mathrm{Cent}(f_{-2})$ is $16$ and the conjugacy class of $f_{-2}$ contains only $f_{-2}$ since $f_{-2} \in \mathrm{Z}(\A).$ Thus, the $\R$-dimension of the class is again zero in agreement with~(\ref{eq:M2dCRdim}). 
\item For $k \neq \pm 2,$ we consider three subcases when $k=1,$ $k=0,$ and $k=-1.$ When $k=1,$ then $\Delta_{1}(t)=(t-i)^3(t+i)$ and $m_{1}(t)= (t-i)(t+i).$ Then the root 
$\mathcal{F}_1 = \mathrm{diag}(i,i,i,-i)$ corresponds to
\begin{gather}
f_{1} = \frac12(e_{23}+e_{123}-e_{2345}+e_{12345}).
\label{eq:f31} 
\end{gather}
Note that $\mathrm{Spec}(f_1) = \frac12 = \frac{k}{d}$ so $f_1$ is an exceptional root of~$-\unitm$.

\noindent
When $k=0,$ then $\Delta_{0}(t)= (t-i)^2(t+i)^2$ and $m_{0}(t)=(t-i)(t+i)$. Thus the root of $-\unitm$ in this case is $\mathcal{F}_{0} = \mathrm{diag}(i,i,-i,-i)$ which corresponds to just
\begin{gather}
  f_{0} = e_{123}.
  \label{eq:f32} 
\end{gather}
Note that $\mathrm{Spec}(f_0) = 0$ thus $f_0=e_{123}$ is an ordinary root of $-\unitm$.

\noindent
When $k=-1,$ then $\Delta_{-1}(t)=(t-i)(t+i)^3$ and $m_{-1}(t)=(t-i)(t+i)$. Then, the root of $-\unitm$ in this case is $\mathcal{F}_{-1}=\mathrm{diag}(i,-i,-i,-i)$ which corresponds to
\begin{gather}
  f_{-1}=\frac12(e_{23}+e_{123}+e_{2345}-e_{12345}).
  \label{eq:Cl41f-1}
\end{gather}
Since $\mathrm{Scal}(f_{-1}) = - \frac12 = \frac{k}{d},$ we gather that $f_{-1}$ is an exceptional root. 

As expected, we can also see that the roots $\omega$ and $-\omega$ are related via the grade involution whereas $f_{1}=-\tilde{f}_{-1}$ where $\tilde{\phantom{u}}$ denotes the reversion in 
$\cl(4,1).$
\end{enumerate}
%See Table~\ref{table1} for a summary of the roots of $-\unitm$ in $\cl(4,1)$.
\label{example4}
\end{ex}

%%%%%%%%%%%%%%%%%%%%%%%%%%%%%%%%%%%%%NEW EXAMPLES%%%%%%%%%%%%%%%%%%%%%%%%%%%%%%%%%%%%%%%%%%%%%%%
\begin{ex}
Let $\cl(0,5) \cong  \A$ where $\A = \M(4,\C)$ for $d=2$. The $\cl(0,5)$ 1-vectors can be represented\footnote{For the computations of this example in the Maple package CLIFFORD we
have used the identification $i=e_{3}$. Yet the results obtained for the
square roots of $-\unitm$ are independent of this setting (we can alternatively
use, e.g., $i=e_{12345}$, or the imaginary unit $i \in \C$), as
can easily be checked for $f_1$ of 
(\ref{eq:f3105}),
%(7.11)
$f_0$ of 
(\ref{eq:f3205})
%(7.12)
 and $f_{-1}$ of 
(\ref{eq:Cl05f-1})
%(7.13)
by only assuming the standard Clifford product rules for $e_1$ to $e_5$. } by the following matrices:
\begin{gather}
e_1=\begin{pmatrix} 0 & -1 & 0 & 0\\ 1 & 0 & 0 & 0\\0 & 0 & 0 & -1\\0 & 0 & 1 & 0 \end{pmatrix},\, 
e_2=\begin{pmatrix} 0 & -i & 0 & 0\\ -i &  0 & 0 & 0\\0 & 0 &  0 & -i\\0 & 0 & -i & 0 \end{pmatrix},\, 
e_3=\begin{pmatrix} -i & 0 & 0 & 0\\ 0 & i & 0 & 0\\0 & 0 &  i & 0\\0 & 0 & 0 & -i \end{pmatrix}, \notag \\
e_4=\begin{pmatrix} 0 & 0 & -1 & 0\\ 0 & 0 & 0 & 1\\1 & 0 &  0 & 0\\0 & -1 & 0 & 0 \end{pmatrix},\, 
e_5=\begin{pmatrix} 0 & 0 & -i & 0\\ 0 & 0 & 0 & i\\-i & 0 & 0 & 0\\0 & i & 0 & 0 \end{pmatrix},
\label{eq:matr05} 
\end{gather}
%under the identification $i = e_{3}.$ 
Like for $\cl(4,1),$ we have five conjugacy classes of the roots $f_k$ of $-\unitm$ in $\cl(0,5)$ for $k \in \{0,\pm 1, \pm 2\}$: four exceptional and one ordinary. Using the same notation as in Example~\ref{example4}, we find the following representatives of the conjugacy classes.
\begin{enumerate}
\item For $k=2,$ we have $\Delta_2(t)= (t-i)^4,$ $m_2(t)= t-i,$ and $\mathcal{F}_2 = \mathrm{diag}(i,i,i,i)$ which in the above representation~(\ref{eq:matr05}) corresponds to the non-trivial central element $f_2=\omega = e_{12345}.$ Then, $\mathrm{Spec}(f_2)= 1 = \frac{k}{d}$; $\mathrm{Scal}(f_2)= 0;$ the $\C$-dimension of the centralizer $\mathrm{Cent}(f_2)$ is $16$; and the $\mathbb{R}$-dimension of the conjugacy class of $f_2$ is zero as it contains only $f_2$ since $f_2 \in \mathrm{Z}(\A).$ Thus, the $\R$-dimension of the class is again zero in agreement with~(\ref{eq:M2dCRdim}). 
\item For $k=-2,$ we have $\Delta_{-2}(t)= (t+i)^4,$ $m_{-2}(t)= t+i,$ and $\mathcal{F}_{-2} = \mathrm{diag}(-i,-i,-i,-i)$ which corresponds to the central element $f_{-2=}-\omega = -e_{12345}.$ Again, $\mathrm{Spec}(f_{-2})= -1 = \frac{k}{d}$; $\mathrm{Scal}(f_{-2})= 0;$ the $\C$-dimension of the centralizer $\mathrm{Cent}(f_{-2})$ is $16$ and the conjugacy class of $f_{-2}$ contains only $f_{-2}$ since $f_{-2} \in \mathrm{Z}(\A).$ Thus, the $\R$-dimension of the class is again zero in agreement with~(\ref{eq:M2dCRdim}). 
\item For $k \neq \pm 2,$ we consider three subcases when $k=1,$ $k=0,$ and $k=-1.$ When $k=1,$ then $\Delta_{1}(t)=(t-i)^3(t+i)$ and $m_{1}(t)= (t-i)(t+i).$ Then the root 
$\mathcal{F}_1 = \mathrm{diag}(i,i,i,-i)$ corresponds to
\begin{gather}
f_{1} = \frac12(e_{3}+e_{12}+e_{45}+e_{12345}).
  \label{eq:f3105} 
\end{gather}
Since $\mathrm{Spec}(f_1) = \frac12 = \frac{k}{d}$, $f_1$ is an exceptional root of~$-\unitm$.

\noindent
When $k=0,$ then $\Delta_{0}(t)= (t-i)^2(t+i)^2$ and $m_{0}(t)=(t-i)(t+i)$. Thus the root of $-\unitm$ is this case is $\mathcal{F}_{0} = \mathrm{diag}(i,i,-i,-i)$ which corresponds to just
\begin{gather}
  f_{0} = e_{45}.
  \label{eq:f3205} 
\end{gather}
Note that $\mathrm{Spec}(f_0) = 0$ thus $f_0=e_{45}$ is an ordinary root of $-\unitm$.

\noindent
When $k=-1,$ then $\Delta_{-1}(t)=(t-i)(t+i)^3$ and $m_{-1}(t)=(t-i)(t+i)$. Then, the root of $-\unitm$ in this case is $\mathcal{F}_{-1}=\mathrm{diag}(i,-i,-i,-i)$ which corresponds to
\begin{gather}
  f_{-1}=\frac12(-e_{3}+e_{12}+e_{45}-e_{12345}).
  \label{eq:Cl05f-1}
\end{gather}
Since $\mathrm{Scal}(f_{-1}) = - \frac12 = \frac{k}{d},$ we gather that $f_{-1}$ is an exceptional root. 

Again we can see that the roots $f_2$ and $f_{-2}$ are related via the grade involution whereas $f_1=-\tilde{f}_{-1}$ where $\tilde{\phantom{u}}$ denotes the reversion in $\cl(0,5).$
\end{enumerate}
\end{ex}

\begin{ex}
Let $\cl(7,0) \cong  \A$ where $\A = \M(8,\C)$ for $d=4$. We have nine conjugacy classes of roots $f_k$ of $-\unitm$ for $k \in \{0,\pm 1, \pm 2\, \pm 3\, \pm 4\}.$ Since $f_k$ is a root of a polynomial $p(t) = t^2+1$ which factors over $\C$ into $(t-i)(t+i)$, its minimal polynomial $m(t)$ will be one of the following: $t-i,$ $t+i,$ or $(t-i)(t+i)=t^2+1.$ 

Respectively, each conjugacy class is characterized by a characteristic polynomial $\Delta_k(t)$ of the matrix $M_k \in \M(8,\C)$ which represents $f_k$. Namely, we have 
$$
\Delta_k(t) = (t-i)^{n_1}(t+i)^{n_2},
$$  
where $n_1+n_2 = 2d=8$ and $n_1=d+k=4+k$ and $n_2=d-k=4-k$. The ordinary root of $-\unitm$ corresponds to $k=0$ whereas the exceptional roots correspond to $k \neq 0.$

\begin{enumerate}
\item When $k=4,$ we have $\Delta_4(t)= (t-i)^8,$ $m_4(t)= t-i,$ and $\mathcal{F}_4 = \mathrm{diag}(\overbrace{i,\ldots, i}^{8})$ which in the representation used by CLIFFORD~\cite{AF:CLIFFORD} corresponds to the non-trivial central element $f_4=\omega = e_{1234567}.$ Clearly, $\mathrm{Spec}(f_4)= 1 = \frac{k}{d}$; $\mathrm{Scal}(f_4)= 0;$ the $\C$-dimension of the centralizer 
$\mathrm{Cent}(f_4)$ is $64$; and the $\mathbb{R}$-dimension of the conjugacy class of $f_4$ is zero since $f_4 \in \mathrm{Z}(\A).$ Thus, the $\R$-dimension of the class is again zero in agreement 
with~(\ref{eq:M2dCRdim}). 
\item When $k=-4,$ we have $\Delta_{-4}(t)= (t+i)^8,$ $m_{-4}(t)= t+i,$ and $\mathcal{F}_{-4} = \mathrm{diag}(\overbrace{-i,\ldots, -i}^{8})$ which corresponds to $f_{-4}=-\omega = -e_{1234567}.$ 
Again, $\mathrm{Spec}(f_{-4})= -1 = \frac{k}{d}$; $\mathrm{Scal}(f_{-4})= 0;$ the $\C$-dimension of the centralizer $\mathrm{Cent}(f)$ is $64$ and the conjugacy class of $f_{-4}$ contains only $f_{-4}$ since $f_{-4} \in \mathrm{Z}(\A).$ Thus, the $\R$-dimension of the class is again zero in agreement with~(\ref{eq:M2dCRdim}). 
\item When $k \neq \pm 4,$ we consider seven subcases when $k=\pm 3,$ $k=\pm 2,$ $k=\pm 1,$ and $k=0.$ 

\noindent
When $k=3,$ then $\Delta_{3}(t)=(t-i)^7(t+i)$ and $m_{3}(t)= (t-i)(t+i).$ Then the root $\mathcal{F}_{3} = \mathrm{diag}(\overbrace{i,\ldots,i}^{7},-i)$ corresponds to
\begin{gather}
f_{3}= \frac14(e_{23}-e_{45}+e_{67}-e_{123}+e_{145}-e_{167}+e_{234567}+3e_{1234567}).
\label{eq:f370} 
\end{gather}
Since $\mathrm{Spec}(f_3) = \frac34 = \frac{k}{d}$, $f_3$ is an exceptional root of~$-\unitm$.

\noindent
When $k=2,$ then $\Delta_{2}(t)=(t-i)^6(t+i)^2$ and $m_{2}(t)= (t-i)(t+i).$ Then the root $\mathcal{F}_{2} = \mathrm{diag}(\overbrace{i,\ldots,i}^{6},-i,-i)$ corresponds to
\begin{gather}
f_{2}= \frac12(e_{67}-e_{45}-e_{123}+e_{1234567}).
\label{eq:f270} 
\end{gather}
Since $\mathrm{Spec}(f_2) = \frac12 = \frac{k}{d}$, $f_2$ is also an exceptional root.

\noindent
When $k=1,$ then $\Delta_{1}(t)=(t-i)^5(t+i)^3$ and $m_{1}(t)= (t-i)(t+i).$ Then the root $\mathcal{F}_{1} = \mathrm{diag}(\overbrace{i,\ldots,i}^{5},-i,-i,-i)$ corresponds to
\begin{gather}
f_{1}= \frac14(e_{23}-e_{45}+3e_{67}-e_{123}+e_{145}+e_{167}-e_{234567}+e_{1234567}).
\label{eq:f170} 
\end{gather}
Since $\mathrm{Spec}(f_1) = \frac14 = \frac{k}{d}$, $f_1$ is another exceptional root.

\noindent
When $k=0,$ then $\Delta_{0}(t)=(t-i)^4(t+i)^4$ and $m_{0}(t)= (t-i)(t+i).$ Then the root $\mathcal{F}_{0} = \mathrm{diag}(i,i,i,i,-i,-i,-i,-i)$ corresponds to
\begin{gather}
f_{0}= \frac12(e_{23}-e_{45}+e_{67}-e_{234567}).
\label{eq:f070} 
\end{gather}
Since $\mathrm{Spec}(f_0) = 0 = \frac{k}{d}$, we see that $f_0$ is an ordinary root of $-\unitm$.

\noindent
When $k=-1,$ then $\Delta_{-1}(t)=(t-i)^3(t+i)^5$ and $m_{-1}(t)= (t-i)(t+i).$ Then the root $\mathcal{F}_{-1} = \mathrm{diag}(i,i,i,\overbrace{-i,\ldots,-i}^{5})$ corresponds to
\begin{gather}
f_{-1}= \frac14(e_{23}-e_{45}+3e_{67}+e_{123}-e_{145}-e_{167}-e_{234567}-e_{1234567}).
\label{eq:fm170} 
\end{gather}
Thus, $\mathrm{Spec}(f_{-1}) = -\frac14 = \frac{k}{d}$ and so $f_{-1}$ is another exceptional root.

\noindent
When $k=-2,$ then $\Delta_{-2}(t)=(t-i)^2(t+i)^6$ and $m_{-2}(t)= (t-i)(t+i).$ Then the root $\mathcal{F}_{-2} = \mathrm{diag}(i,i,\overbrace{-i,\ldots,-i}^{6})$ corresponds to
\begin{gather}
f_{-2}= \frac12(e_{67}-e_{45}+e_{123}-e_{1234567}).
\label{eq:fm270} 
\end{gather}
Since $\mathrm{Spec}(f_{-2}) = -\frac12 = \frac{k}{d}$, we see that $f_{-2}$ is also an exceptional root.

\noindent
When $k=-3,$ then $\Delta_{-3}(t)=(t-i)(t+i)^7$ and $m_{-3}(t)= (t-i)(t+i).$ Then the root $\mathcal{F}_{-3} = \mathrm{diag}(i,\overbrace{-i,\ldots,-i}^{7})$ corresponds to
\begin{gather}
f_{-3}= \frac14(e_{23}-e_{45}+e_{67}+e_{123}-e_{145}+e_{167}+e_{234567}-3e_{1234567}).
\label{eq:fm370} 
\end{gather}
Again, $\mathrm{Spec}(f_{-3}) = -\frac34 = \frac{k}{d}$ and so $f_{-3}$ is another exceptional root of~$-\unitm$.

As expected, we can also see that the roots $\omega$ and $-\omega$ are related via the reversion whereas $f_{3}=-\bar{f}_{-3}$, $f_{2}=-\bar{f}_{-2}$, $f_{1}=-\bar{f}_{-1}$ where $\bar{\phantom{u}}$ denotes the conjugation in $\cl(7,0).$
\end{enumerate}
\end{ex}

\section{Conclusions}
We proved that in all cases $\Scal(f) = 0$ for every square root of $-\unitm$ in $\A$ isomorphic to $\cl(p,q)$. We distinguished \textit{ordinary} square roots of $-\unitm$, and \textit{exceptional} ones. 

In all cases the ordinary square roots $f$ of $-\unitm$ constitute a unique conjugacy class of dimension $\dim(\A)/2$ which has as many connected components as the group $\G(\A)$ of invertible elements in $\A$. Furthermore, we have $\Spec(f) = 0$ (zero pseudoscalar part) if the associated ring is $\R^2$, $\HQ^2$, or $\C$. The exceptional square roots of $-\unitm$ \textit{only} exist if 
$\A \cong \M(2d,\C)$ (see Section~\ref{sc:M2dC}). 

For $\A=\M(2d,\R)$ of Section~\ref{sc:M2dR}, the centralizer and the conjugacy class of a square root $f$ of $-\unitm$ both have $\R$-dimension $2d^2$ with two connected components, pictured in 
Fig.~\ref{fg:M2dR} for $d=1$.

For $\A=\M(2d,\R^2)=\M(2d,\R)\times\M(2d,\R)$ of Section~\ref{sc:M2dR2}, the square roots of $(-\unitm,-\unitm)$ are pairs of two square roots of $-\unitm$ in $\M(2d,\R)$. They constitute a unique conjugacy class with four connected components, each of dimension $4d^2$. Regarding the four connected components, the group $\Inn(\A)$ induces the permutations of the Klein group whereas the quotient group $\Aut(\A)/\Inn(\A)$ is isomorphic to the group of isometries of a Euclidean square in 2D. 

For $\A=\M(d,\HQ)$ of Section~\ref{sc:MdH}, the submanifold of the square roots $f$ of $-\unitm$ is a single connected conjugacy class of $\R$-dimension $2d^2$ equal to the $\R$-dimension of the centralizer of every $f$. The easiest example is $\HQ$ itself for $d=1$.

For $\A=\M(d,\HQ^2)=\M(2d,\HQ)\times\M(2d,\HQ)$ of Section~\ref{sc:MdH2}, the square roots of $(-\unitm,-\unitm)$ are pairs of two square roots $(f,f')$ of $-\unitm$ in $\M(2d,\HQ)$ and constitute a unique connected conjugacy class of $\R$-dimension $4d^2$. The group $\Aut(\A)$ has two connected components: the neutral component $\Inn(\A)$ connected to the identity and the second component containing the swap automorphism $(f,f')\mapsto (f',f)$. The simplest case for $d=1$ is $\HQ^2$ isomorphic to $\cl(0,3)$.

For $\A=\M(2d,\C)$ of Section~\ref{sc:M2dC}, the square roots of $-\unitm$ are in bijection to the idempotents. First, the ordinary square roots of $-\unitm$ (with $k=0$) constitute a conjugacy class of $\R$-dimension $4d^2$ of a single connected component which is invariant under $\Aut(\A)$. Second, there are $2d$ conjugacy classes of exceptional square roots of $-\unitm$, each composed of a single connected component, characterized by equality $\Spec(f) = k/d$ (the pseudoscalar coefficient) with $\pm k \in \{1, 2, \ldots, d\}$, and their $\R$-dimensions are $4(d^2-k^2)$. The group 
$\Aut(\A)$ includes conjugation of the pseudoscalar $\omega \mapsto -\omega$ which maps the conjugacy class associated with $k$ to the class associated with $-k$. The simplest case for $d=1$ is the Pauli matrix algebra isomorphic to the geometric algebra $\cl(3,0)$ of 3D Euclidean space $\R^3$, and to complex biquaternions \cite{SJS:Biqroots}. 

Section~\ref{sc:M2dC} includes explicit examples for $d=2$: $\cl(4,1)$ and $\cl(0,5)$, and for $d=4$: $\cl(7,0)$. Appendix~\ref{AppendA} summarizes the square roots of $-\unitm$ in all 
$\cl(p,q) \cong \M(2d,\C)$ for $d=1,2,4$. Appendix~\ref{AppendB} contains details on how square roots of $-\unitm$ can be computed using the package CLIFFORD for Maple. 

Among the many possible \textit{applications} of this research, the possibility of \textit{new integral transformations} in Clifford analysis is very promising. This field thus obtains essential algebraic information, which can e.g., be used to create \textit{steerable} transformations, which may be steerable within a connected component of a submanifold of square roots of $-\unitm$. 

%%%%%%%%%%%%%%%%%%%%%%%%%%%%%%%%%%%%%%%%%%%%%%%%%%%%%%%%%%%%%%%%%%%%%%%
\appendix
\section{Summary of roots of $-\unitm$ in $\cl(\lowercase{p,q}) \cong \M(2\lowercase{d},\C)$ for $\lowercase{d}=1,2,4$}
\label{AppendA}

In this appendix we summarize roots of $-\unitm$ for Clifford algebras $\cl(p,q) \cong \M(2d,\C)$ for $d=1,2,4$. These roots have been computed with CLIFFORD~\cite{AF:CLIFFORD}. Maple~\cite{Maple} worksheets written to derive these roots are posted at~\cite{worksheets}.

%%%Table 1
%{\small
\begin{table}[htb]
\label{tab:t1}
\begin{center}
\renewcommand{\arraystretch}{1.3}
\begin{tabular}{|c|l|l|}\hline
$k$ & $f_k$ & $\Delta_k(t)$  \\\hline
$1$ & $\omega =e_{123}$ & $(t-i)^2$   \\\hline
$0$ & $e_{23}$ & $(t-i)(t+i)$   \\\hline
$-1$ & $-\omega = -e_{123}$ & $(t+i)^2$   \\\hline
%%%%%%%%%%%%%%%%%%%%%%%%%%%%%%%%%%%%%%%%%%%%%%%%%%%%%%%%%%%%%%%%%%%%%%%%%%%%%%%%%%%%%%%%%%
\end{tabular}
\end{center}
\caption{Square roots of $-\unitm$ in $\cl(3,0)\cong \M(2,\C)$, $d=1$}
\label{table1}
\end{table}
%}

%%%Table 2
%{\small
\begin{table}[htb]
\label{tab:t2}
\begin{center}
\renewcommand{\arraystretch}{1.3}
\begin{tabular}{|c|l|l|}\hline
$k$ & $f_k$ & $\Delta_k(t)$  \\\hline
$2$ & $\omega=e_{12345}$ & $(t-i)^4$  \\\hline
$1$ & $\frac12(e_{23}+e_{123}-e_{2345}+e_{12345})$ & $(t-i)^3(t+i)$   \\\hline
$0$ & $e_{123}$ & $(t-i)^2(t+i)^2$   \\\hline
$-1$ & $\frac12(e_{23}+e_{123}+e_{2345}-e_{12345})$ & $(t-i)(t+i)^3$   \\\hline
$-2$ & $-\omega=-e_{12345}$ & $(t+i)^4$   \\\hline
%%%%%%%%%%%%%%%%%%%%%%%%%%%%%%%%%%%%%%%%%%%%%%%%%%%%%%%%%%%%%%%%%%%%%%%%%%%%%%%%%%%%%%%%%%
\end{tabular}
\end{center}
\caption{Square roots of $-\unitm$ in $\cl(4,1)\cong \M(4,\C)$, $d=2$}
\label{table2}
\end{table}
%}

%%Table 3
%{\small
\begin{table}[htb]
\label{tab:t3}
\begin{center}
\renewcommand{\arraystretch}{1.3}
\begin{tabular}{|c|l|l|}\hline
$k$ & $f_k$ & $\Delta_k(t)$  \\\hline
$2$ & $\omega=e_{12345}$ & $(t-i)^4$  \\\hline
$1$ & $\frac12(e_{3}+e_{12}+e_{45}+e_{12345})$ & $(t-i)^3(t+i)$   \\\hline
$0$ & $e_{45}$ & $(t-i)^2(t+i)^2$   \\\hline
$-1$ & $\frac12(-e_{3}+e_{12}+e_{45}-e_{12345})$ & $(t-i)(t+i)^3$   \\\hline
$-2$ & $-\omega=-e_{12345}$ & $(t+i)^4$   \\\hline
%%%%%%%%%%%%%%%%%%%%%%%%%%%%%%%%%%%%%%%%%%%%%%%%%%%%%%%%%%%%%%%%%%%%%%%%%%%%%%%%%%%%%%%%%%
\end{tabular}
\end{center}
\caption{Square roots of $-\unitm$ in $\cl(0,5)\cong \M(4,\C)$, $d=2$}
\label{table3}
\end{table}
%}

%%Table 4
%{\small
\begin{table}[htb]
\label{tab:t4}
\begin{center}
\renewcommand{\arraystretch}{1.3}
\begin{tabular}{|c|l|l|}\hline
$k$ & $f_k$ & $\Delta_k(t)$  \\\hline
$2$ & $\omega=e_{12345}$ & $(t-i)^4$  \\\hline
$1$ & $\frac12(e_{3}+e_{134}+e_{235}+\omega)$ & $(t-i)^3(t+i)$   \\\hline
$0$ & $e_{134}$ & $(t-i)^2(t+i)^2$   \\\hline
$-1$ & $\frac12(-e_{3}+e_{134}+e_{235}-\omega)$ & $(t-i)(t+i)^3$   \\\hline
$-2$ & $-\omega=-e_{12345}$ & $(t+i)^4$   \\\hline
%%%%%%%%%%%%%%%%%%%%%%%%%%%%%%%%%%%%%%%%%%%%%%%%%%%%%%%%%%%%%%%%%%%%%%%%%%%%%%%%%%%%%%%%%%
\end{tabular}
\end{center}
\caption{Square roots of $-\unitm$ in $\cl(2,3)\cong \M(4,\C)$, $d=2$}
\label{table4}
\end{table}
%}

%%Table 5
%{\small
\begin{table}[htb]
\label{tab:t5}
\begin{center}
\renewcommand{\arraystretch}{1.3}
\begin{tabular}{|c|p{2.50in}|l|}\hline
$k$ & $f_k$ & $\Delta_k(t)$  \\\hline
$4$ & $\omega=e_{1234567}$ & $(t-i)^8$  \\\hline
$3$ & $\frac14(e_{23}-e_{45}+e_{67}-e_{123}+e_{145}\newline \hspace*{20ex}-e_{167}+e_{234567}+3\omega)$ & $(t-i)^7(t+i)$  \\\hline
$2$ & $\frac12(e_{67}-e_{45}-e_{123}+\omega)$ & $(t-i)^6(t+i)^2$  \\\hline
$1$ & $\frac14(e_{23}-e_{45}+3e_{67}-e_{123}+e_{145}\newline \hspace*{20ex}+e_{167}-e_{234567}+\omega)$ & $(t-i)^5(t+i)^3$  \\\hline
$0$ & $\frac12(e_{23}-e_{45}+e_{67}-e_{234567})$ & $(t-i)^4(t+i)^4$   \\\hline
$-1$ & $\frac14(e_{23}-e_{45}+3e_{67}+e_{123}-e_{145}\newline \hspace*{20ex}-e_{167}-e_{234567}-\omega)$ & $(t-i)^3(t+i)^5$ \\\hline
$-2$ & $\frac12(e_{67}-e_{45}+e_{123}-\omega)$ & $(t-i)^2(t+i)^6$   \\\hline
$-3$ & $\frac14(e_{23}-e_{45}+e_{67}+e_{123}-e_{145}\newline \hspace*{20ex}+e_{167}+e_{234567}-3\omega)$ & $(t-i)(t+i)^7$   \\\hline
$-4$ & $-\omega=-e_{1234567}$ & $(t+i)^8$   \\\hline
%%%%%%%%%%%%%%%%%%%%%%%%%%%%%%%%%%%%%%%%%%%%%%%%%%%%%%%%%%%%%%%%%%%%%%%%%%%%%%%%%%%%%%%%%%
\end{tabular}
\end{center}
\caption{Square roots of $-\unitm$ in $\cl(7,0)\cong \M(8,\C)$, $d=4$}
\label{table5}
\end{table}
%}

%%Table 6
%{\small
\begin{table}[htb]
\label{tab:t6}
\begin{center}
\renewcommand{\arraystretch}{1.3}
\begin{tabular}{|c|p{2.50in}|l|}\hline
$k$ & $f_k$ & $\Delta_k(t)$  \\\hline
$4$ & $\omega=e_{1234567}$ & $(t-i)^8$  \\\hline
$3$ & $\frac14(e_{4}-e_{23}-e_{56}+e_{1237}+e_{147}\newline \hspace*{20ex}+e_{1567}-e_{23456}+3\omega)$ & $(t-i)^7(t+i)$  \\\hline
$2$ & $\frac12(-e_{23}-e_{56}+e_{147}+\omega)$ & $(t-i)^6(t+i)^2$  \\\hline
$1$ & $\frac14(-e_{4}-e_{23}-3e_{56}-e_{1237}+e_{147}\newline \hspace*{20ex}+e_{1567}-e_{23456}+\omega)$ & $(t-i)^5(t+i)^3$  \\\hline
$0$ & $\frac12(e_{4}+e_{23}+e_{56}+e_{23456})$ & $(t-i)^4(t+i)^4$   \\\hline
$-1$ & $\frac14(-e_{4}-e_{23}-3e_{56}+e_{1237}-e_{147}\newline \hspace*{20ex}-e_{1567}-e_{23456}-\omega)$ & $(t-i)^3(t+i)^5$ \\\hline
$-2$ & $\frac12(-e_{23}-e_{56}-e_{147}-\omega)$ & $(t-i)^2(t+i)^6$   \\\hline
$-3$ & $\frac14(e_{4}-e_{23}-e_{56}-e_{1237}-e_{147}\newline \hspace*{20ex}-e_{1567}-e_{23456}-3\omega)$ & $(t-i)(t+i)^7$   \\\hline
$-4$ & $-\omega=-e_{1234567}$ & $(t+i)^8$   \\\hline
%%%%%%%%%%%%%%%%%%%%%%%%%%%%%%%%%%%%%%%%%%%%%%%%%%%%%%%%%%%%%%%%%%%%%%%%%%%%%%%%%%%%%%%%%%
\end{tabular}
\end{center}
\caption{Square roots of $-\unitm$ in $\cl(1,6)\cong \M(8,\C)$, $d=4$}
\label{table6}
\end{table}
%}

%%Table 7
%{\small
\begin{table}[htb]
\label{tab:t7}
\begin{center}
\renewcommand{\arraystretch}{1.3}
\begin{tabular}{|c|p{2.50in}|l|}\hline
$k$ & $f_k$ & $\Delta_k(t)$  \\\hline
$4$ & $\omega=e_{1234567}$ & $(t-i)^8$  \\\hline
$3$ & $\frac14(e_{4}+e_{145}+e_{246}+e_{347}-e_{12456}\newline \hspace*{20ex}-e_{13457}-e_{23467}+3\omega)$ & $(t-i)^7(t+i)$  \\\hline
$2$ & $\frac12(e_{145}-e_{12456}-e_{13457}+\omega)$ & $(t-i)^6(t+i)^2$  \\\hline
$1$ & $\frac14(-e_{4}+e_{145}+e_{246}-e_{347}-3e_{12456}\newline \hspace*{20ex}-e_{13457}-e_{23467}+\omega)$ & $(t-i)^5(t+i)^3$  \\\hline
$0$ & $\frac12(e_{4}+e_{12456}+e_{13457}+e_{23467})$ & $(t-i)^4(t+i)^4$   \\\hline
$-1$ & $\frac14(-e_{4}-e_{145}-e_{246}+e_{347}-3e_{12456}\newline \hspace*{20ex}-e_{13457}-e_{23467}-\omega)$ & $(t-i)^3(t+i)^5$ \\\hline
$-2$ & $\frac12(-e_{145}-e_{12456}-e_{13457}-\omega)$ & $(t-i)^2(t+i)^6$   \\\hline
$-3$ & $\frac14(e_{4}-e_{145}-e_{246}-e_{347}-e_{12456}\newline \hspace*{20ex}-e_{13457}-e_{23467}-3\omega)$ & $(t-i)(t+i)^7$   \\\hline
$-4$ & $-\omega=-e_{1234567}$ & $(t+i)^8$   \\\hline
%%%%%%%%%%%%%%%%%%%%%%%%%%%%%%%%%%%%%%%%%%%%%%%%%%%%%%%%%%%%%%%%%%%%%%%%%%%%%%%%%%%%%%%%%%
\end{tabular}
\end{center}
\caption{Square roots of $-\unitm$ in $\cl(3,4)\cong \M(8,\C)$, $d=4$}
\label{table7}
\end{table}
%}

%%Table 8
{\small
\begin{table}[htb]
\label{tab:t8}
\begin{center}
\renewcommand{\arraystretch}{1.3}
\begin{tabular}{|c|p{2.50in}|l|}\hline
$k$ & $f_k$ & $\Delta_k(t)$  \\\hline
$4$ & $\omega=e_{1234567}$ & $(t-i)^8$  \\\hline
$3$ & $\frac14(-e_{23}+e_{123}+e_{2346}+e_{2357}-e_{12346}\newline \hspace*{20ex}-e_{12357}+e_{234567}+3\omega)$ & $(t-i)^7(t+i)$  \\\hline
$2$ & $\frac12(e_{123}-e_{12346}-e_{12357}+\omega)$ & $(t-i)^6(t+i)^2$  \\\hline
$1$ & $\frac14(-e_{23}+e_{123}-e_{2346}+e_{2357}-3e_{12346}\newline \hspace*{20ex}-e_{12357}-e_{234567}+\omega)$ & $(t-i)^5(t+i)^3$  \\\hline
$0$ & $\frac12(e_{23}+e_{12346}+e_{12357}+e_{234567})$ & $(t-i)^4(t+i)^4$   \\\hline
$-1$ & $\frac14(-e_{23}-e_{123}+e_{2346}-e_{2357}-3e_{12346}\newline \hspace*{20ex}-e_{12357}-e_{234567}-\omega)$ & $(t-i)^3(t+i)^5$ \\\hline
$-2$ & $\frac12(-e_{123}-e_{12346}-e_{12357}-\omega)$ & $(t-i)^2(t+i)^6$   \\\hline
$-3$ & $\frac14(-e_{23}-e_{123}-e_{2346}-e_{2357}-e_{12346}\newline \hspace*{20ex}-e_{12357}+e_{234567}-3\omega)$ & $(t-i)(t+i)^7$   \\\hline
$-4$ & $-\omega=-e_{1234567}$ & $(t+i)^8$   \\\hline
%%%%%%%%%%%%%%%%%%%%%%%%%%%%%%%%%%%%%%%%%%%%%%%%%%%%%%%%%%%%%%%%%%%%%%%%%%%%%%%%%%%%%%%%%%
\end{tabular}
\end{center}
\caption{Square roots of $-\unitm$ in $\cl(5,2)\cong \M(8,\C)$, $d=4$}
\label{table8}
\end{table}
}

\newpage
\section{A sample Maple worksheet}
\label{AppendB}
In this appendix we show a computation of roots of $-\unitm$ in $\cl(3,0)$ in CLIFFORD. Although these computations certainly can be performed by hand, as shown in Section \ref{sc:M2dC}, they illustrate how CLIFFORD can be used instead especially when extending these computations to higher dimensions.\footnote{In showing Maple display we have edited Maple output to save space. Package $\mathtt{asvd}$ is a supplementary package written by the third author and built into CLIFFORD. The primary purpose of $\mathtt{asvd}$ is to compute Singular Value Decomposition in Clifford algebras~\cite{asvd}.} To see the actual Maple worksheets where these computations have been performed, see~\cite{worksheets}. 
%%%%%%%%%%%%%%%%%%%%%
\begin{smallmaplegroup}
\begin{mapleinput}
\mapleinline{active}{1d}{restart:with(Clifford):with(linalg):with(asvd):}{%
}
\mapleinline{active}{1d}{p,q:=3,0; ##<<-- selecting signature}{%
}
\mapleinline{active}{1d}{B:=diag(1$p,-1$q): ##<<-- defining diagonal bilinear form}{%
}
\mapleinline{active}{1d}{eval(makealiases(p+q)): ##<<-- defining aliases}{%
}
\mapleinline{active}{1d}{clibas:=cbasis(p+q); ##assigning basis for Cl(3,0)}{%
}
\end{mapleinput}

\mapleresult
\begin{maplelatex}
\mapleinline{inert}{2d}{p, q := 3, 0;}{%
\[
p, \,q := 3, \,0
\]
}
\mapleinline{inert}{2d}{clibas := [Id, e1, e2, e3, e12, e13, e23, e123];}{%
\[
\mathit{clibas} := [\mathit{Id}, \,\mathit{e1}, \,\mathit{e2}, \,
\mathit{e3}, \,\mathit{e12}, \,\mathit{e13}, \,\mathit{e23}, \,
\mathit{e123}]
\]
}
\end{maplelatex}
\end{smallmaplegroup}
%%%%%%%%%%%%%%%%%%%%%%%%%%%%%
\begin{smallmaplegroup}
\begin{mapleinput}
\mapleinline{active}{1d}{data:=clidata(); ##<<-- displaying information about Cl(3,0)}{%
}
\end{mapleinput}
\mapleresult
\begin{maplelatex}
\mapleinline{inert}{2d}{data := [complex, 2, simple, 1/2*Id+1/2*e1, [Id, e2, e3, e23], [Id,
e23], [Id, e2]];}{%
\[
\mathit{data} := [\mathit{complex}, \,2, \,\mathit{simple}, \,
{\displaystyle \frac {\mathit{Id}}{2}}  + {\displaystyle \frac {
\mathit{e1}}{2}} , \,[\mathit{Id}, \,\mathit{e2}, \,\mathit{e3}, 
\,\mathit{e23}], \,[\mathit{Id}, \,\mathit{e23}], \,[\mathit{Id}
, \,\mathit{e2}]]
\]
}
\end{maplelatex}
\end{smallmaplegroup}
%%%%%%%%%%%%%%%%%%%%%%%%%%%%%%%%%%%%
\begin{smallmaplegroup}
\begin{mapleinput}
\mapleinline{active}{1d}{MM:=matKrepr(); ##<<-- displaying default matrices to generators}{%
}
\end{mapleinput}
\mapleresult
\begin{maplettyout}
Cliplus has been loaded. Definitions for type/climon and
type/clipolynom now include &C and &C[K]. Type ?cliprod for help.
\end{maplettyout}

\begin{maplelatex}
\mapleinline{inert}{2d}{MM := [e1 = matrix([[1, 0], [0, -1]]), e2 = matrix([[0, 1], [1, 0]]),
e3 = matrix([[0, -e23], [e23, 0]])];}{%
\[
\mathit{MM} := [\mathit{e1}= \left[ 
{\begin{array}{rr}
1 & 0 \\
0 & -1
\end{array}}
 \right] , \,\mathit{e2}= \left[ 
{\begin{array}{rr}
0 & 1 \\
1 & 0
\end{array}}
 \right] , \,\mathit{e3}= \left[ 
{\begin{array}{cc}
0 &  - \mathit{e23} \\
\mathit{e23} & 0
\end{array}}
 \right] ]
\]
}
\end{maplelatex}
\end{smallmaplegroup}
\noindent
Pauli algebra representation displayed in (7.1):
\begin{smallmaplegroup}
\begin{mapleinput}
\mapleinline{active}{1d}{sigma[1]:=evalm(rhs(MM[1]));}{%
}
\mapleinline{active}{1d}{sigma[2]:=evalm(rhs(MM[2]));}{%
}
\mapleinline{active}{1d}{sigma[3]:=evalm(rhs(MM[3]));}{%
}
\end{mapleinput}
\mapleresult
\begin{maplelatex}
\mapleinline{inert}{2d}{sigma[1], sigma[2], sigma[3] := matrix([[0, 1], [1, 0]]), matrix([[0,
-e23], [e23, 0]]), matrix([[1, 0], [0, -1]]);}{%
\[
{\sigma _{1}}, \,{\sigma _{2}}, \,{\sigma _{3}} :=  \left[ 
{\begin{array}{rr}
0 & 1 \\
1 & 0
\end{array}}
 \right] , \, \left[ 
{\begin{array}{cc}
0 &  - \mathit{e23} \\
\mathit{e23} & 0
\end{array}}
 \right] , \, \left[ 
{\begin{array}{rr}
1 & 0 \\
0 & -1
\end{array}}
 \right] 
\]
}
\end{maplelatex}
\end{smallmaplegroup}
%%%%%%%%%%%%%%%%%%%%%%%%%%%%%%%
We show how we represent the imaginary unit $i$ in the field $\C$ and the diagonal matrix 
$\mathrm{diag}(i,i):$
\begin{smallmaplegroup}
\begin{mapleinput}
\mapleinline{active}{1d}{ii:=e23;         ##<<-- complex imaginary unit}{%
}
\mapleinline{active}{1d}{II:=diag(ii,ii); ##<<-- diagonal matrix diag(i,i) }{%
}
\end{mapleinput}
\mapleresult
\begin{maplelatex}
\mapleinline{inert}{2d}{ii := e23;}{%
\[
\mathit{ii} := \mathit{e23}
\]
}
\mapleinline{inert}{2d}{II := matrix([[e23, 0], [0, e23]]);}{%
\[
\mathit{II} :=  \left[ 
{\begin{array}{cc}
\mathit{e23} & 0 \\
0 & \mathit{e23}
\end{array}}
 \right] 
\]
}
\end{maplelatex}
\end{smallmaplegroup}

\noindent
We compute matrices $m_1, m_2, \ldots, m_8$ representing each basis element in 
$\cl(3,0)$ isomorphic with $\C(2)$. Note that in our representation element 
$e_{23}$ in $\cl(3,0)$ is used to represent the imaginary unit $i$.
\begin{smallmaplegroup}
\begin{mapleinput}
\mapleinline{active}{1d}{for i from 1 to nops(clibas) do\newline 
\mytab lprint(`The basis element`,clibas[i],`is represented by the following\newline
\mytab matrix:`);\newline
\mytab M[i]:=subs(Id=1,matKrepr(clibas[i])) od;}{%
}
\end{mapleinput}

\mapleresult
\begin{maplettyout}
`The basis element`, Id, `is represented by the following matrix:`
\end{maplettyout}
\begin{maplelatex}
\mapleinline{inert}{2d}{M[1] := matrix([[1, 0], [0, 1]]);}{%
\[
{M_{1}} :=  \left[ 
{\begin{array}{rr}
1 & 0 \\
0 & 1
\end{array}}
 \right] 
\]
}
\end{maplelatex}
\begin{maplettyout}
`The basis element`, e1, `is represented by the following matrix:`
\end{maplettyout}

\begin{maplelatex}
\mapleinline{inert}{2d}{M[2] := matrix([[1, 0], [0, -1]]);}{%
\[
{M_{2}} :=  \left[ 
{\begin{array}{rr}
1 & 0 \\
0 & -1
\end{array}}
 \right] 
\]
}
\end{maplelatex}

\begin{maplettyout}
`The basis element`, e2, `is represented by the following matrix:`
\end{maplettyout}

\begin{maplelatex}
\mapleinline{inert}{2d}{M[3] := matrix([[0, 1], [1, 0]]);}{%
\[
{M_{3}} :=  \left[ 
{\begin{array}{rr}
0 & 1 \\
1 & 0
\end{array}}
 \right] 
\]
}
\end{maplelatex}

\begin{maplettyout}
`The basis element`, e3, `is represented by the following matrix:`
\end{maplettyout}

\begin{maplelatex}
\mapleinline{inert}{2d}{M[4] := matrix([[0, -e23], [e23, 0]]);}{%
\[
{M_{4}} :=  \left[ 
{\begin{array}{cc}
0 &  - \mathit{e23} \\
\mathit{e23} & 0
\end{array}}
 \right] 
\]
}
\end{maplelatex}

\begin{maplettyout}
`The basis element`, e12, `is represented by the following matrix:`
\end{maplettyout}

\begin{maplelatex}
\mapleinline{inert}{2d}{M[5] := matrix([[0, 1], [-1, 0]]);}{%
\[
{M_{5}} :=  \left[ 
{\begin{array}{rr}
0 & 1 \\
-1 & 0
\end{array}}
 \right] 
\]
}
\end{maplelatex}

\begin{maplettyout}
`The basis element`, e13, `is represented by the following matrix:`
\end{maplettyout}

\begin{maplelatex}
\mapleinline{inert}{2d}{M[6] := matrix([[0, -e23], [-e23, 0]]);}{%
\[
{M_{6}} :=  \left[ 
{\begin{array}{cc}
0 &  - \mathit{e23} \\
 - \mathit{e23} & 0
\end{array}}
 \right] 
\]
}
\end{maplelatex}

\begin{maplettyout}
`The basis element`, e23, `is represented by the following matrix:`
\end{maplettyout}

\begin{maplelatex}
\mapleinline{inert}{2d}{M[7] := matrix([[e23, 0], [0, -e23]]);}{%
\[
{M_{7}} :=  \left[ 
{\begin{array}{cc}
\mathit{e23} & 0 \\
0 &  - \mathit{e23}
\end{array}}
 \right] 
\]
}
\end{maplelatex}

\begin{maplettyout}
`The basis element`, e123, `is represented by the following matrix:`
\end{maplettyout}

\begin{maplelatex}
\mapleinline{inert}{2d}{M[8] := matrix([[e23, 0], [0, e23]]);}{%
\[
{M_{8}} :=  \left[ 
{\begin{array}{cc}
\mathit{e23} & 0 \\
0 & \mathit{e23}
\end{array}}
 \right] 
\]
}
\end{maplelatex}
\end{smallmaplegroup}
%%%%%%%%%%%%%%%%%%%%%%%%%%%%%%%%%%%%

We will use the procedure $\mathtt{phi}$ from the $\mathtt{asvd}$ package which gives an
isomorphism from $\C(2)$ to $\cl(3,0)$.  This way we can find the image
in $\cl(3,0)$ of any complex $2 \times 2$ complex matrix $A$. Knowing the
image of each matrix $m_1, m_2, \ldots, m_8$ in terms of the Clifford
polynomials in $\cl(3,0)$, we can easily find the image of $A$ in our default spinor representation of $\cl(3,0)$ which is built into CLIFFORD.

Procedure $\mathtt{Centralizer}$ computes a centralizer of $f$ with respect to the
Clifford basis $L$:
\begin{smallmaplegroup}
\begin{mapleinput}
\mapleinline{active}{1d}{Centralizer:=proc(f,L) local c,LL,m,vars,i,eq,sol;\newline
\mytab m:=add(c[i]*L[i],i=1..nops(L));\newline
\mytab vars:=[seq(c[i],i=1..nops(L))];\newline
\mytab eq:=clicollect(cmul(f,m)-cmul(m,f));\newline
\mytab if eq=0 then return L end if:\newline
\mytab sol:=op(clisolve(eq,vars));\newline
\mytab m:=subs(sol,m);\newline
\mytab m:=collect(m,vars);\newline
\mytab return sort([coeffs(m,vars)],bygrade);\newline
\mytab end proc:}{%
}
\end{mapleinput}
\end{smallmaplegroup}

Procedures $\mathtt{Scal}$ and $\mathtt{Spec}$ compute the scalar and the pseudoscalar parts
of $f$.
\begin{smallmaplegroup}
\begin{mapleinput}
\mapleinline{active}{1d}{Scal:=proc(f) local p:
return scalarpart(f);
end proc:}{%
}
\mapleinline{active}{1d}{Spec:=proc(f) local N; global p,q;\newline
\mytab N:=p+q:\newline
\mytab return coeff(vectorpart(f,N),op(cbasis(N,N)));\newline
\mytab end proc:}{%
}
\end{mapleinput}
\end{smallmaplegroup}
%%%%%%%%%%%%%%%%%%%%%%%%%%
\noindent
The matrix idempotents in $\C(2)$ displayed in (7.2) are as follows:
\begin{smallmaplegroup}
\begin{mapleinput}
\mapleinline{active}{1d}{d:=1:Eps[1]:=sigma[1] &cm sigma[1];}{%
}
\mapleinline{active}{1d}{Eps[0]:=evalm(1/2*(1+sigma[3]));}{%
}
\mapleinline{active}{1d}{Eps[-1]:=diag(0,0);}{%
}
\end{mapleinput}

\mapleresult
\begin{maplelatex}
\mapleinline{inert}{2d}{Eps[1], Eps[0], Eps[-1] := matrix([[1, 0], [0, 1]]), matrix([[1, 0],
[0, 0]]), matrix([[0, 0], [0, 0]]);}{%
\[
{\mathit{Eps}_{1}}, \,{\mathit{Eps}_{0}}, \,{\mathit{Eps}_{-1}}
 :=  \left[ 
{\begin{array}{rr}
1 & 0 \\
0 & 1
\end{array}}
 \right] , \, \left[ 
{\begin{array}{rr}
1 & 0 \\
0 & 0
\end{array}}
 \right] , \, \left[ 
{\begin{array}{rr}
0 & 0 \\
0 & 0
\end{array}}
 \right] 
\]
}
\end{maplelatex}
\end{smallmaplegroup}
%%%%%%%%%%%%%%%%%%%%%%%%%%%%
This function $\mathtt{ff}$ computes matrix square root of $-1$ corresponding to
the matrix idempotent $eps$:
\begin{smallmaplegroup}
\begin{mapleinput}
\mapleinline{active}{1d}{ff:=eps->evalm(II &cm (2*eps-1));}{%
}
\end{mapleinput}

\mapleresult
\begin{maplelatex}
\mapleinline{inert}{2d}{ff := eps -> evalm(`&cm`(II,2*eps-1));}{%
\[
\mathit{ff} := \mathit{eps}\rightarrow \mathrm{evalm}(\mathit{II}
\,\mathrm{\&cm}\,(2\,\mathit{eps} - 1))
\]
}
\end{maplelatex}
\end{smallmaplegroup}
%%%%%%%%%%%%%%%%%%%%%%%%%%%%%%%%
\noindent
We compute matrix square roots of $-1$ which correspond to the
idempotents $Eps_1, Eps_0, Eps_{-1}$, and their characteristic and
minimal polynomials. Note that in Maple the default imaginary unit is
denoted by $I$.
\begin{smallmaplegroup}
\begin{mapleinput}
\mapleinline{active}{1d}{F[1]:=ff(Eps[1]); ##<<--this square root of -1 corresponds to Eps[1]\newline
\mytab Delta[1]:=charpoly(subs(e23=I,evalm(F[1])),t);\newline
\mytab Mu[1]:=minpoly(subs(e23=I,evalm(F[1])),t);}{%
}
\end{mapleinput}

\mapleresult
\begin{maplelatex}
\mapleinline{inert}{2d}{F[1] := matrix([[e23, 0], [0, e23]]);}{%
\[
{F_{1}} :=  \left[ 
{\begin{array}{cc}
\mathit{e23} & 0 \\
0 & \mathit{e23}
\end{array}}
 \right], 
\quad
\Delta _{1} := (t - I)^{2},
\quad  
M_{1} := t - I
\]
}
\end{maplelatex}
\end{smallmaplegroup}
\begin{smallmaplegroup}
\begin{mapleinput}
\mapleinline{active}{1d}{F[0]:=ff(Eps[0]); ##<<--this square root of -1 corresponds to Eps[0]\newline
\mytab Delta[0]:=charpoly(subs(e23=I,evalm(F[0])),t);\newline
\mytab Mu[0]:=minpoly(subs(e23=I,evalm(F[0])),t);}{%
}
\end{mapleinput}

\mapleresult
\begin{maplelatex}
\mapleinline{inert}{2d}{F[0] := matrix([[e23, 0], [0, -e23]]);}{%
\[
{F_{0}} :=  \left[ 
{\begin{array}{cc}
\mathit{e23} & 0 \\
0 &  - \mathit{e23}
\end{array}}
 \right],
\quad
{\Delta _{0}} := (t - I)\,(t + I),
\quad
{M_{0}} := 1 + t^{2}
\]
}
\end{maplelatex}

\end{smallmaplegroup}
\begin{smallmaplegroup}
\begin{mapleinput}
\mapleinline{active}{1d}{F[-1]:=ff(Eps[-1]); ##<<--this square root of -1 corresponds to
Eps[-1]\newline
\mytab Delta[-1]:=charpoly(subs(e23=I,evalm(F[-1])),t);\newline
\mytab Mu[-1]:=minpoly(subs(e23=I,evalm(F[-1])),t);}{%
}
\end{mapleinput}

\mapleresult
\begin{maplelatex}
\mapleinline{inert}{2d}{F[-1] := matrix([[-e23, 0], [0, -e23]]);}{%
\[
{F_{-1}} :=  \left[ 
{\begin{array}{cc}
 - \mathit{e23} & 0 \\
0 &  - \mathit{e23}
\end{array}}
 \right],
\quad 
{\Delta _{-1}} := (t + I)^{2},
\quad
{M_{-1}} := t + I
\]
}
\end{maplelatex}
\end{smallmaplegroup}
%%%%%%%%%%%%%%%%%%%%%%%%%
\noindent
Now, we can find square roots of $-1$ in $\cl(3,0)$ which correspond to the
matrix square roots $F_{-1}, F_{0}, F_{1}$ via the isomorphism 
$\phi: \cl(3,0) \rightarrow \C(2)$ realized with the procedure $\mathtt{phi}$. 

First, we let $\mathtt{reprI}$ denote element in $\cl(3,0)$ which represents the
diagonal $(2d) \times (2d)$ with $I=i$ on the diagonal where $i^2 = -1$. This element will replace the imaginary unit $I$ in the minimal polynomials. 
\begin{smallmaplegroup}
\begin{mapleinput}
\mapleinline{active}{1d}{reprI:=phi(diag(I$(2*d)),M);}{%
}
\end{mapleinput}

\mapleresult
\begin{maplelatex}
\mapleinline{inert}{2d}{reprI := e123;}{%
\[
\mathit{reprI} := \mathit{e123}
\]
}
\end{maplelatex}
\end{smallmaplegroup}
\noindent
Now, we compute the corresponding square roots $f_1,f_0,f_{-1}$ in $\cl(3,0).$

\begin{smallmaplegroup}
\begin{mapleinput}
\mapleinline{active}{1d}{f[1]:=phi(F[1],M); ##<<--element in Cl(3,0) corresponding to F[1]\newline
\mytab cmul(f[1],f[1]); ##<<--checking that this element is a root of -1\newline
\mytab Mu[1]; ##<<--recalling minpoly of matrix F[1]\newline
\mytab subs(e23=I,evalm(subs(t=evalm(F[1]),Mu[1]))); ##<<--F[1] in Mu[1]\newline 
\mytab mu[1]:=subs(I=reprI,Mu[1]); ##<<--defining minpoly of f[1]\newline
\mytab cmul(f[1]-reprI,Id); ##<<-- verifying that f[1] satisfies mu[1]}{%
}
\end{mapleinput}

\mapleresult
\begin{maplelatex}
\mapleinline{inert}{2d}{f[1] := e123;}{%
\[
{f_{1}} := \mathit{e123}
\]
}
\mapleinline{inert}{2d}{-Id;}{%
\[
 - \mathit{Id},
\quad
t - I,
\quad
 \left[ 
{\begin{array}{rr}
0 & 0 \\
0 & 0
\end{array}}
 \right] 
\]
}
\mapleinline{inert}{2d}{mu[1] := t-e123;}{%
\[
{\mu _{1}} := t - \mathit{e123},
\quad
0
\]
}
\end{maplelatex}
\end{smallmaplegroup}
\begin{smallmaplegroup}
\begin{mapleinput}
\mapleinline{active}{1d}{f[0]:=phi(F[0],M); ##<<--element in Cl(3,0) corresponding to F[0]\newline
\mytab cmul(f[0],f[0]); ##<<-- checking that this element is a root of -1\newline
\mytab Mu[0]; ##<<--recalling minpoly of matrix F[0]\newline
\mytab subs(e23=I,evalm(subs(t=evalm(F[0]),Mu[0]))); ##<<--F[0] in Mu[0]\newline
\mytab mu[0]:=subs(I=reprI,Mu[0]); ##<<--defining minpoly of f[0]\newline
\mytab cmul(f[0]-reprI,f[0]+reprI); ##<<--f[0] satisfies mu[0]}{%
}
\end{mapleinput}
\mapleresult
\begin{maplelatex}
\mapleinline{inert}{2d}{f[0] := e23;}{%
\[
{f_{0}} := \mathit{e23}
\]
}
\mapleinline{inert}{2d}{-Id;}{%
\[
 - \mathit{Id},
\quad
1 + t^{2},
\quad
 \left[ 
{\begin{array}{rr}
0 & 0 \\
0 & 0
\end{array}}
 \right] 
\]
}
\mapleinline{inert}{2d}{mu[0] := 1+t^2;}{%
\[
{\mu _{0}} := 1 + t^{2},
\quad
0
\]
}
\end{maplelatex}
\end{smallmaplegroup}
\begin{smallmaplegroup}
\begin{mapleinput}
\mapleinline{active}{1d}{f[-1]:=phi(F[-1],M); ##<<--element in Cl(3,0) corresponding to F[-1]\newline
\mytab cmul(f[-1],f[-1]);  ##<<--checking that this element is a root of -1\newline
\mytab Mu[-1]; ##<<--recalling minpoly of matrix F[-1]\newline
\mytab subs(e23=I,evalm(subs(t=evalm(F[-1]),Mu[-1]))); ##<<--F[-1] in Mu[-1]\newline
\mytab mu[-1]:=subs(I=reprI,Mu[-1]); ##<<--defining minpoly of f[-1]\newline
\mytab cmul(f[-1]+reprI,Id); ##<<--f[-1] satisfies mu[-1]}{%
}
\end{mapleinput}
\mapleresult
\begin{maplelatex}
\mapleinline{inert}{2d}{f[-1] := -e123;}{%
\[
{f_{-1}} :=  - \mathit{e123}
\]
}
\mapleinline{inert}{2d}{-Id;}{%
\[
 - \mathit{Id},
\quad
t + I,
\quad
 \left[ 
{\begin{array}{rr}
0 & 0 \\
0 & 0
\end{array}}
 \right] 
\]
}
\mapleinline{inert}{2d}{mu[-1] := t+e123;}{%
\[
{\mu _{-1}} := t + \mathit{e123}, 
\quad
0
\]
}
\end{maplelatex}
\end{smallmaplegroup}

Functions $\mathtt{RdimCentralizer}$ and $\mathtt{RdimConjugClass}$ of $d$ and $k$ compute the
real dimension of the centralizer $\mathrm{Cent}(f)$ and the conjugacy class of $f$ (see (7.4)).
\begin{smallmaplegroup}
\begin{mapleinput}
\mapleinline{active}{1d}{RdimCentralizer:=(d,k)->2*((d+k)^2+(d-k)^2); ##<<--from the theory}{%
}
\mapleinline{active}{1d}{RdimConjugClass:=(d,k)->4*(d^2-k^2); ##<<--from the theory}{%
}
\end{mapleinput}

\mapleresult
\begin{maplelatex}
\mapleinline{inert}{2d}{RdimCentralizer := (d, k) -> 2*(d+k)^2+2*(d-k)^2;}{%
\[
\mathit{RdimCentralizer} := (d, \,k)\rightarrow 2\,(d + k)^{2} + 
2\,(d - k)^{2}
\]
}
\mapleinline{inert}{2d}{RdimConjugClass := (d, k) -> 4*d^2-4*k^2;}{%
\[
\mathit{RdimConjugClass} := (d, \,k)\rightarrow 4\,d^{2} - 4\,k^{2}
\]
}
\end{maplelatex}
\end{smallmaplegroup}
Now, we compute the centralizers of the roots and use notation $d, k, n_1, n_2$ displayed in Examples.

%%%%%%%%%%%%%%%%%%%%%%%%%%%%%%%%%%%%%%%%%%%%%%%%%%%%%
\noindent
Case $k=1:$

\begin{smallmaplegroup}
\begin{mapleinput}
\mapleinline{active}{1d}{d:=1:k:=1:n1:=d+k;n2:=d-k;\newline
\mytab A1:=diag(I$n1,-I$n2); ##<<-- this is the first matrix root of -1}{%
}
\end{mapleinput}

\mapleresult
\begin{maplelatex}
\mapleinline{inert}{2d}{n1 := 2;}{%
\[
\mathit{n1} := 2, 
\quad
\mathit{n2} := 0, 
\quad 
\mathit{A1} :=  \left[ 
{\begin{array}{cc}
I & 0 \\
0 & I
\end{array}}
 \right] 
\]
}
\end{maplelatex}
\end{smallmaplegroup}
\begin{smallmaplegroup}
\begin{mapleinput}
\mapleinline{active}{1d}{f[1]:=phi(A1,M); cmul(f[1],f[1]); Scal(f[1]), Spec(f[1]);}{%
}
\end{mapleinput}

\mapleresult
\begin{maplelatex}
\mapleinline{inert}{2d}{f[1] := e123;}{%
\[
{f_{1}} := \mathit{e123}, 
\quad
- \mathit{Id},
\quad
0, \quad 1
\]
}
\end{maplelatex}
\end{smallmaplegroup}
\begin{smallmaplegroup}
\begin{mapleinput}
\mapleinline{active}{1d}{LL1:=Centralizer(f[1],clibas); ##<<--centralizer of f[1]\newline
\mytab dimCentralizer:=nops(LL1); ##<<--real dimension of centralizer of f[1]\newline
\mytab RdimCentralizer(d,k); ##<<--dimension of centralizer of f[1] from theory\newline
\mytab evalb(dimCentralizer=RdimCentralizer(d,k)); ##<<--checking equality
}{%
}
\end{mapleinput}
\mapleresult
\begin{maplelatex}
\mapleinline{inert}{2d}{LL1 := [Id, e1, e2, e3, e12, e13, e23, e123];}{%
\[
\mathit{LL1} := [\mathit{Id}, \,\mathit{e1}, \,\mathit{e2}, \,
\mathit{e3}, \,\mathit{e12}, \,\mathit{e13}, \,\mathit{e23}, \,
\mathit{e123}]
\]
}
\mapleinline{inert}{2d}{dimCentralizer := 8;}{%
\[
\mathit{dimCentralizer} := 8,
\quad
8,
\quad
\mathit{true}
\]
}
\end{maplelatex}
\end{smallmaplegroup}
%%%%%%%%%%%%%%%%%%%%%%%%%%%%%%%%%%%%%%%%%%%%%%%%%%%%%
\noindent
Case $k=0:$

\begin{smallmaplegroup}
\begin{mapleinput}
\mapleinline{active}{1d}{d:=1:k:=0:n1:=d+k;n2:=d-k;\newline
\mytab A0:=diag(I$n1,-I$n2);  ##<<-- this is the second matrix root of -1}{%
}
\end{mapleinput}

\mapleresult
\begin{maplelatex}
\mapleinline{inert}{2d}{n1 := 1;}{%
\[
\mathit{n1} := 1,
\quad
\mathit{n2} := 1,
\quad
\mathit{A0} :=  \left[ 
{\begin{array}{cc}
I & 0 \\
0 &  - I
\end{array}}
 \right] 
\]
}
\end{maplelatex}
\end{smallmaplegroup}
\begin{smallmaplegroup}
\begin{mapleinput}
\mapleinline{active}{1d}{f[0]:=phi(A0,M); cmul(f[0],f[0]); Scal(f[0]), Spec(f[0]);}{%
}
\end{mapleinput}

\mapleresult
\begin{maplelatex}
\mapleinline{inert}{2d}{f[0] := e23;}{%
\[
{f_{0}} := \mathit{e23}, 
\quad
- \mathit{Id},
\quad
0, \quad 0
\]
}
\end{maplelatex}

\end{smallmaplegroup}
\begin{smallmaplegroup}
\begin{mapleinput}
\mapleinline{active}{1d}{LL0:=Centralizer(f[0],clibas); ##<<--centralizer of f[0]\newline
\mytab dimCentralizer:=nops(LL0); ##<<--real dimension of centralizer of f[0]\newline
\mytab RdimCentralizer(d,k); ##<<--dimension of centralizer of f[0] from theory\newline
\mytab evalb(dimCentralizer=RdimCentralizer(d,k)); ##<<--checking equality
}{%
}
\end{mapleinput}

\mapleresult
\begin{maplelatex}
\mapleinline{inert}{2d}{LL0 := [Id, e1, e23, e123];}{%
\[
\mathit{LL0} := [\mathit{Id}, \,\mathit{e1}, \,\mathit{e23}, \,
\mathit{e123}]
\]
}
\mapleinline{inert}{2d}{dimCentralizer := 4;}{%
\[
\mathit{dimCentralizer} := 4,
\quad
4,
\quad
\mathit{true}
\]
}
\end{maplelatex}
\end{smallmaplegroup}
%%%%%%%%%%%%%%%%%%%%%%%%%%%%%%%%%%%%%%%%%%%%%%%%%%%%%
\noindent
Case $k=-1:$

\begin{smallmaplegroup}
\begin{mapleinput}
\mapleinline{active}{1d}{d:=1:k:=-1:n1:=d+k;n2:=d-k; \newline
\mytab Am1:=diag(I$n1,-I$n2); ##<<-- this is the third matrix root of -1}{%
}
\end{mapleinput}

\mapleresult
\begin{maplelatex}
\mapleinline{inert}{2d}{n1 := 0;}{%
\[
\mathit{n1} := 0,
\quad
\mathit{n2} := 2,
\quad
\mathit{Am1} :=  \left[ 
{\begin{array}{cc}
 - I & 0 \\
0 &  - I
\end{array}}
 \right] 
\]
}
\end{maplelatex}
\end{smallmaplegroup}

\begin{smallmaplegroup}
\begin{mapleinput}
\mapleinline{active}{1d}{f[-1]:=phi(Am1,M); cmul(f[-1],f[-1]); Scal(f[-1]), Spec(f[-1]);}{%
}
\end{mapleinput}

\mapleresult
\begin{maplelatex}
\mapleinline{inert}{2d}{f[-1] := -e123;}{%
\[
{f_{-1}} :=  - \mathit{e123},
\quad
 - \mathit{Id},
\quad
0, \quad -1
\]
}
\end{maplelatex}
\end{smallmaplegroup}

\begin{smallmaplegroup}
\begin{mapleinput}
\mapleinline{active}{1d}{LLm1:=Centralizer(f[-1],clibas); ##<<--centralizer of f[-1]\newline
\mytab dimCentralizer:=nops(LLm1); ##<<--real dimension of centralizer of f[-1]\newline
\mytab RdimCentralizer(d,k); ##<<--dimension of centralizer of f[-1] from theory
\mytab evalb(dimCentralizer=RdimCentralizer(d,k)); ##<<--checking equality
}{%
}
\end{mapleinput}

\mapleresult
\begin{maplelatex}
\mapleinline{inert}{2d}{LLm1 := [Id, e1, e2, e3, e12, e13, e23, e123];}{%
\[
\mathit{LLm1} := [\mathit{Id}, \,\mathit{e1}, \,\mathit{e2}, \,
\mathit{e3}, \,\mathit{e12}, \,\mathit{e13}, \,\mathit{e23}, \,
\mathit{e123}]
\]
}
\mapleinline{inert}{2d}{dimCentralizer := 8;}{%
\[
\mathit{dimCentralizer} := 8,
\quad
8,
\quad
\mathit{true}
\]
}
\end{maplelatex}
\end{smallmaplegroup}
%%%%%%%%%%%%%%%%%%%%%%%%%%%%%%%%%%%%%%%%%%%%%%

We summarize roots of $-1$ in $\cl(3,0)$:
\begin{smallmaplegroup}
\begin{mapleinput}
\mapleinline{active}{1d}{'F[1]'=evalm(F[1]); ##<<--square root of -1 in C(2)\newline
\mytab Mu[1]; ##<<--minpoly of matrix F[1]\newline
\mytab 'f[1]'=f[1]; ##<<--square root of -1 in Cl(3,0)\newline
\mytab mu[1]; ##<<--minpoly of element f[1] 
}{%
}
\end{mapleinput}
\mapleresult
\begin{maplelatex}
\mapleinline{inert}{2d}{F[1] = matrix([[e23, 0], [0, e23]]);}{%
\[
{F_{1}}= \left[ 
{\begin{array}{cc}
\mathit{e23} & 0 \\
0 & \mathit{e23}
\end{array}}
 \right], 
\quad
t - I
\]
}
\mapleinline{inert}{2d}{f[1] = e123;}{%
\[
{f_{1}}=\mathit{e123},
\quad
t - \mathit{e123}
\]
}
\end{maplelatex}
\end{smallmaplegroup}
\begin{smallmaplegroup}
\begin{mapleinput}
\mapleinline{active}{1d}{'F[0]'=evalm(F[0]); ##<<-- square root of -1 in C(2)\newline
\mytab Mu[0]; ##<<--minpoly of matrix F[0]\newline
\mytab 'f[0]'=f[0]; ##<<--square root of -1 in Cl(3,0)\newline
\mytab mu[0]; ##<<--minpoly of element f[0] 
}{%
}
\end{mapleinput}

\mapleresult
\begin{maplelatex}
\mapleinline{inert}{2d}{F[0] = matrix([[e23, 0], [0, -e23]]);}{%
\[
{F_{0}}= \left[ 
{\begin{array}{cc}
\mathit{e23} & 0 \\
0 &  - \mathit{e23}
\end{array}}
 \right],
\quad
1 + t^{2}
\]
}
\end{maplelatex}

\begin{maplelatex}
\mapleinline{inert}{2d}{f[0] = e23;}{%
\[
{f_{0}}=\mathit{e23},
\quad
1 + t^{2}
\]
}
\end{maplelatex}
\end{smallmaplegroup}
\begin{smallmaplegroup}
\begin{mapleinput}
\mapleinline{active}{1d}{'F[-1]'=evalm(F[-1]); ##<<--square root of -1 in C(2)\newline
\mytab Mu[-1]; ##<<--minpoly of matrix F[-1]\newline
\mytab 'f[-1]'=f[-1]; ##<<--square root of -1 in Cl(3,0)\newline
\mytab mu[-1]; ##<<--minpoly of element f[-1] 
}{%
}
\end{mapleinput}

\mapleresult
\begin{maplelatex}
\mapleinline{inert}{2d}{F[-1] = matrix([[-e23, 0], [0, -e23]]);}{%
\[
{F_{-1}}= \left[ 
{\begin{array}{cc}
 - \mathit{e23} & 0 \\
0 &  - \mathit{e23}
\end{array}}
 \right],
\quad
t + I
\]
}
\mapleinline{inert}{2d}{f[-1] = -e123;}{%
\[
{f_{-1}}= - \mathit{e123},
\quad
t + \mathit{e123}
\]
}
\end{maplelatex}
\end{smallmaplegroup}
%%%%%%%%%%%%%%%%%%%%%%%%%%%%
Finaly, we verify that roots $f_1$ and $f_{-1}$ are related via the reversion:

\begin{smallmaplegroup}
\begin{mapleinput}
\mapleinline{active}{1d}{reversion(f[1])=f[-1]; evalb(\%);}{%
}
\end{mapleinput}

\mapleresult
\begin{maplelatex}
\mapleinline{inert}{2d}{-e123 = -e123;}{%
\[
 - \mathit{e123}= - \mathit{e123},
\quad
\mathit{true}
\]
}
\end{maplelatex}
\end{smallmaplegroup}

%\bibitem{test} A. B. C. Test, \textit{On a Test.} J. of Testing \textbf{88} (2000), 100--120.
%\bibitem{latex} G. Gr\"atzer, \textit{Math into \LaTeX.} 3rd Edition, Birkh\"auser, 2000.

\end{document}